\documentclass[11pt]{article}
\usepackage{amsmath,amsthm,amsfonts,amssymb,amscd}
\usepackage[utf8]{inputenc}
\usepackage[T1]{fontenc}
\usepackage{lmodern}
\usepackage{graphics}
\usepackage{color}
\usepackage[pdftex]{graphicx}
\def\tto{\;{\lower 1pt\hbox{$\rightarrow$}}\kern -10pt
\hbox{\raise 2pt\hbox{$\rightarrow$}}\;}

\def\ra{\rangle}
\def\la{\langle}

\def\h{\hfill\Box}
\def\R{\mathbb R}

\def\N{\mathcal{N}}

\def\cone{\mbox{\rm cone}\,}

\def\dim{\mbox{\rm dim}\,}

\def\ker{\mbox{\rm ker}\,}

\def\cl*co{\mbox{\rm cl}^*\mbox{\rm co}\,}
\def\cl{\mbox{\rm cl}\,}

\def\h{\hfill\triangle}

\def\N{\mathbb N}
\def\hs7{\hspace*{7pt}}

\renewcommand{\theequation}{\thesection.\arabic{equation}}

\def\h{\hfill\Box}

\begin{document}
\newtheorem{Theorem}{Theorem}[section]
\newtheorem{Proposition}[Theorem]{Proposition}
\newtheorem{Remark}[Theorem]{Remark}
\newtheorem{Lemma}[Theorem]{Lemma}
\newtheorem{Corollary}[Theorem]{Corollary}
\newtheorem{Definition}[Theorem]{Definition}
\newtheorem{Example}[Theorem]{Example}
\newtheorem{Counterexample}[Theorem]{Counterexample}
\newtheorem{Fact}[Theorem]{Fact}
\renewcommand{\theequation}{\thesection.\arabic{equation}}
\normalsize
\def\proof{
\normalfont
\medskip
{\noindent\itshape Proof.\h
space*{6pt}\ignorespaces}}
\def\endproof{$\h$\vspace*{0.1in}}
\title{\bf Revisiting the Constant-Rank Constraint Qualification for Second-Order Cone Programs}
\date{}
\author{Nguyen Huy Chieu\footnote{Department of Mathematics, Vinh
		University, Vinh, Nghe  An, Viet Nam; email: chieunh@vinhuni.edu.vn},\, \ Nguyen Thi Quynh Trang\footnote{Department of Mathematics, Vinh
University, Vinh, Nghe  An, Viet Nam; email: quynhtrang@vinhuni.edu.vn} \, \  and\ \, \ Nguyen Thi Hai Yen \footnote{Corresponding author, Faculty of Mathematics and Information Technology, The University of Danang – University of Science and Education, Da Nang 550000,     Viet Nam; email: nthyen$\_$kt@ued.udn.vn. }}
\maketitle
\begin{quote}
{\small \noindent {\bf Abstract.}
 The constant rank constraint qualification (CRCQ) for second-order cone programs, introduced  by Andreani et al. in [{\it Math. Program.} 202 (2023), 473 - 513], shares some desirable properties with its classical nonlinear programming counterpart; specifically, it guarantees strong second-order necessary conditions for optimality, and is independent of the Robinson constraint qualification. However, unlike the classical version, this new CRCQ can fail  in the linear case, and it is unclear whether CRCQ implies  the metric subregularity constraint qualification (MSCQ). The aim of this paper is to examine the CRCQ for  second-order cone programs  in the linear setting. First, we  show  that the facial constant rank property, which is a key requirement for the validity of CRCQ, does not always hold in this context. Then, we derive a necessary and sufficient condition for a feasible point to satisfy this property.
 After that, we  establish an easily verifiable characterization of CRCQ. Finally, utilizing this characterization, we prove that CRCQ and MSCQ are equivalent.}

\medskip
\noindent {\bf Key Words.}   Second-order cone programming;  Affine second-order cone constraints;  Constant rank constraint qualification;  Metric subregularity constraint qualification.
\end{quote}
\maketitle

\section{Introduction}
\setcounter{equation}{0}

The constant rank constraint qualification (CRCQ) introduced by Janin \cite{J84} plays important roles in nonlinear programming  \cite{GM15, ShuLu, L12,  MN15}. It can be utilized to establish stability results \cite{J84, GM15}, ensure algorithmic convergence \cite{andr08, andr07}, and derive  optimality conditions \cite{AES10, andr07, Chieu25, GM15,  ML16}.  Notably, CRCQ is strictly weaker than the linear independence constraint qualification (LICQ) but stronger than the metric subregularity constraint qualification (MSCQ); furthermore, it is independent of the Mangasarian-Fromovitz constraint qualification (MFCQ).  Unlike the latter, however, it is a strong second-order constraint qualification and holds at every feasible point of any linear program \cite{AES10}.

 The CRCQ has been extended and adapted to various significant classes of constrained optimization problems. For mathematical programs with equilibrium constraints (MPEC), Steffensen and Ulbrich \cite{SU10} introduced the MPEC-CRCQ to establish convergence for their proposed relaxation scheme. Similarly, Hoheisel et al. \cite{HKS12} presented the MPVC-CRCQ in the context of mathematical programs with vanishing constraints. More recently, Xu and Ye \cite{XY23} proposed the MPDC-CRCQ and the MPODC-CRCQ for mathematical programs with disjunctive and ortho-disjunctive constraints, respectively. These extensions are relatively natural and straightforward, as they are based on exploiting the polyhedral structure of the given sets.

The extension of CRCQ to nonpolyhedral conic programs is considerably more complex. While Zhang and Zhang \cite{zz19} made the initial attempt to address this, their approach was subsequently shown to be invalid \cite{AF21}. Motivated by this and its potential for analyzing the convergence and reliability of optimization algorithms, Andreani and coworkers introduced several variants of CRCQ for second-order cone programs (SOCP) and semidefinite programs (SDP) \cite{AHMR23,AHMRSS22, andreani23c}.  In particular, Andreani et al. \cite{andreani23c} employed cone reducibility and facial structure of the underlying cone to geometrically characterize the classical relaxed CRCQ, and  utilized this to extend  the notion to nonlinear SOCP and SDP, establishing what is now known as CRCQ in those contexts. Such an approach was later applied to define CRCQ in reducible conic programming \cite{andreani23b}. The resulting CRCQ shares several desirable properties with its classical nonlinear programming counterpart; specifically, it guarantees strong second-order necessary conditions for optimality and remains independent of the Robinson constraint qualification.  However, unlike the classical version, this new CRCQ can fail  in the linear case, and it is unclear whether CRCQ implies  the metric subregularity constraint qualification (MSCQ).

Because CRCQ is not universally satisfied at feasible points of linear nonpolyhedral cone programs, characterizing the specific points where the condition holds is essential. Such a characterization enables us to identify the particular classes of nonpolyhedral cone programs  where the CRCQ remains a valid tool for analysis.
      Furthermore, given that second-order variational analysis is well-established for nonpolyhedral cone programs under MSCQ \cite{BGM19, GM19, Hang20b, Mor24}, and considering that the link between CRCQ and MSCQ remains poorly understood in this framework, investigating their relationship is of significant practical and theoretical value.
      
    In this paper, we examines the CRCQ in linear nonpolyhedral second-order cone settings, focusing on the two problems identified above. First, we  show  that the facial constant rank property, which is a key requirement for the validity of CRCQ, does not always hold in this context. Then, we derive a necessary and sufficient condition for a feasible point to satisfy this property.  After that, we  establish an easily verifiable characterization of CRCQ. Finally, utilizing this characterization, we prove that CRCQ and MSCQ are equivalent.
   
 The paper is organized as follows. Section 2 recalls essential preliminaries, including the notions of cones, faces,  reducibility, and constraint qualifications within SOCPs. Section 3 investigates the facial constant rank property, where we derive a necessary and sufficient condition for its satisfaction at a feasible point and demonstrate that the property is not locally preserved. The characterization of CRCQ is established in Section 4. Subsequently, Section 5 explores the relationship between CRCQ and MSCQ. Finally, Section 6 summarizes our findings and outlines directions for future research.

\vspace{12pt}

\section{Preliminaries}
\setcounter{equation}{0}
This section recalls  some  concepts, notations and results from variational analysis and optimization (see, e.g., \cite{andreani23c,BS00, Mord06, rock70, rw}) that are needed for  our subsequent analysis.

Unless otherwise stated, $\mathbb{R}^n$ denotes the $n$-dimensional Euclidean space equipped with the standard inner product $\langle \cdot, \cdot \rangle$ and the induced norm $\|\cdot\|$.  We denote the nonnegative and positive orthants by $\mathbb{R}^n_+$ and $\mathbb{R}^n_{++}$, respectively.The set of all $m \times n$ real matrices is denoted by $\mathbb{R}^{m \times n}$. Following \cite{Luenb69}, vectors $x \in \mathbb{R}^n$ are represented as $n$-tuples $(x_1, \dots, x_n)$ but are treated as column vectors in all matrix operations. The transpose of a matrix $A$ is denoted by $A^*$.
For a real-valued function $f: \mathbb{R}^n \to \mathbb{R}$, we denote its gradient vector and Hessian matrix at $x$ by $\nabla f(x)$ and $\nabla^2 f(x)$. For a vector-valued function $g: \mathbb{R}^n \to \mathbb{R}^m$ with $m>1$, $\nabla g(x)$ represents both the Jacobian and the Fréchet derivative.
For a set $\Omega \subseteq \mathbb{R}^n$, we denote its interior, closure,  and boundary by $\text{int}(\Omega)$, $\text{cl}(\Omega)$,  and $\text{bd}(\Omega)$, respectively. The polar cone of $\Omega$  is $\Omega^* := \{z \in \mathbb{R}^n \mid \langle z, x \rangle \leq 0 \text{ for all } x \in \Omega\}$. We denote the smallest cone containing $\Omega$, the spanned subspace, and its orthogonal complement by $\text{cone}(\Omega)$, $\text{span}(\Omega)$, and $\Omega^\perp$, respectively. For a linear mapping $A$, the symbols $\text{Im}(A)$ and $\text{Ker}(A)$ are  its image and kernel, respectively. The Euclidean projector of $x\in \R^n$ onto $\Omega$  is the set  $\Pi_\Omega(x)$  defined as
$$\Pi_\Omega(x):=\{u\in \Omega\ |\ \|x-u\|=\text{dist}(x; \Omega)\},$$
where  $\text{dist}(x; \Omega):=\inf\limits_{u\in \Omega}\|x-u\|$ is the Euclidean distance from $x$ to $\Omega$.

Let  $C$ be a nonempty convex set in $\R^n.$ Recall that:
\begin{itemize}
\item A nonempty convex subset $F$ of $C$  is said to be a {\it face} of $C$, denoted by $F\unlhd C$,  if  $z, w\in F$ whenever  $z, w\in C$ with $\alpha z +(1-\alpha)w\in F$ for some $\alpha\in (0, 1)$; see \cite[Section 18]{rock70}.
\item Given a set $D\subset C$, the {\it minimal face} associated with $D$, denoted by $F_{\rm min}(D)$, is defined as the smallest face of $C$ that contains
    $D$.
\item The {\it relative interior} of $C$ is the set $\text{\rm ri}(C)$ given by
\[\text{\rm ri}(C):=\left\{x\in \text{\rm aff}(C)|\; \exists r>0 \ \mbox{such that}\  \mathbb{B}_r(x)\cap \text{\rm aff}(C)\subset C\right\},\]
where $\mathbb{B}_r(x):=\{u\in \R^n\ |\ \|u-x\|\leq r\}$ and $\text{\rm aff}(C)$ denotes the {\it affine hull} of $C$.
\item The set of {\it feasible directions} of $C$ at $\bar x\in C$ is the set $\text{\rm dir}(\bar x,C)$ defined by
$$\text{\rm dir}(\bar x,C):=\{d\in \R^n\ |\ \bar x+td\in C\quad \mbox{for some}\ t>0\}.$$
\item  $C$ is called a {\it nice cone} if it is a closed cone and  $C^*+E^{\bot}$ is closed for every $E\unlhd C$; see \cite[p. 396]{Pataki07}.
\end{itemize}

It is well-known  that both the $n$-dimensional Euclidean space $\mathbb{R}^n$ and the second-order cones (or Lorentz cones) in $\mathbb{R}^m$, defined as$$\mathcal{Q}_{m} := \{ (v_0, v_r) \in \mathbb{R} \times \mathbb{R}^{m-1} \;|\; v_0 \geq \|v_r\| \}$$for $m \in \mathbb{N}^*:=\{1,2,...\}$, belong to the class of nice cones \cite[p. 399]{Pataki07}.
 The second-order cone $\mathcal{Q}_{m}$ has  infinitely many faces for $m > 2$. These are categorized into three distinct types: the vertex $\{0\}$, the entire cone $\mathcal{Q}_m$, and the extreme rays originating at the vertex and passing through any point on the non-zero boundary $v \in \text{bd}^+(\mathcal{Q}_m) := \text{bd}(\mathcal{Q}_m) \setminus \{0\}$; see \cite[p. 488]{andreani23c}. The faces of the non-negative orthant $\mathbb{R}_+$ are only the origin and the cone itself.

Let  $\Omega\subset\mathbb{R}^n$ be  a non-empty set locally closed around $\bar x\in \Omega$, that is, $\Omega\cap U$ is closed for some neighborhood $U$ of $\bar x$. Recall \cite{Mord06,Mor24,rw} that:
\begin{itemize}		
\item The (Bouligand-Severi) {\it tangent cone} to the set $\Omega$ at   $\bar{x}\in \Omega$  is the set $T_\Omega(\bar x)$ defined by
    \begin{equation*}
  	T_\Omega(\bar x):=\big\{d\in\R^n | \, \exists  t_k \downarrow 0, \  d_k\rightarrow d\ \mbox{ with }\  \bar x+t_kd_k\in\Omega \quad \forall    k\in \N^*\big\}.
  \end{equation*}
\item  The (Mordukhovich) {\it limiting normal cone} to the set $\Omega$ at $\bar x$ is the set $N_{\Omega}(\bar x)$ given by
\begin{equation*}
N_{\Omega}(\bar x):=\left\{v\in \R^n\ |\ \exists x_k\to \bar x,  v_k\to v \ \mbox{with}\   v_k\in \cone\big(x_k-\Pi_{\Omega}(x_k)\big)\quad
 \forall k\in \N^*\right\}.
\end{equation*}
If $\bar x\not\in \Omega,$ put $T_\Omega(\bar x):=\emptyset$ and  $N_{\Omega}(\bar x):=\emptyset$ by convention.
\end{itemize}

Note  that   $\bar{x} \in \operatorname{int}(\Omega)$ iff  $N_\Omega(\bar{x}) = \{0\}$;  see \cite[Corollary 2.24]{Mord06} or \cite[Exercise 6.19]{rw}.
For a closed convex set $\Omega$ containing $\bar x$, the tangent cone $T_\Omega(\bar x)$ and the normal cone $N_\Omega(\bar x)$ can be expressed simply as follows:
$$T_\Omega(\bar x) = \text{cl} \left( \bigcup_{\lambda > 0} \lambda(\Omega - \bar x) \right),$$
and $$N_\Omega(\bar x) =[T_\Omega(\bar x)]^*= \{ v \in \mathbb{R}^n \mid \langle v, x - \bar x \rangle \leq 0, \forall x \in \Omega \}.$$
Furthermore, for second-order cones, it is not difficult to see that
\begin{eqnarray}\label{t2ocone}
T_{\mathcal{Q}_m}(x)=\begin{cases}
	\mathbb{R}^m & \text{ if } x\in \text{int} (\mathcal{Q}_m),\\
	\mathcal{Q}_m & \text{ if } x=0,\\
	\{x'\in\mathbb{R}^m\;|\; \la \widetilde{x}, x'\ra\leqslant0\}& \text{ if } x\in \text{bd}^+(\mathcal{Q}_m),
\end{cases}
\end{eqnarray}
and
\begin{eqnarray}\label{n2ocone}
	N_{\mathcal{Q}_m}(x)=\begin{cases}
		\{0\} & \text{ if } x\in \text{int} (\mathcal{Q}_m),\\
		-\mathcal{Q}_m & \text{ if } x=0,\\
		\mathbb{R}_+\widetilde{x}& \text{ if } x\in \text{bd}^+(\mathcal{Q}_m),
	\end{cases}
\end{eqnarray}
where $\widetilde{x}=(-x_0, x_r)$ if $x=(x_0, x_r)\in\mathbb{R}\times\mathbb{R}^{m-1}$; see, e.g., \cite[p.13]{Hang20}.

Recall from \cite[Definition 3.135]{BS00} that a closed convex cone $\mathcal{K}\subset \mathbb{R}^m$ is said to be 	{\it $C^2$-reducible} to a closed convex pointed cone $\mathcal{C}\subset \R^\ell$ at a point $\bar y\in \mathcal{K}$ if there exists a neighborhood $\mathcal{N}$ of $\bar y$ and a twice continuously	differentiable reduction mapping $\Xi: \mathcal{N} \to \mathbb{R}^\ell$  such that  $\Xi(\bar y)=0$,  $\nabla \Xi(\bar y)$ is surjective, and
$\mathcal{K}\cap \mathcal{N}=\{y\in \mathcal{N}\,| \,\Xi(y)\in \mathcal{C}\}.$

When $\mathcal{K} = \mathcal{Q}_m$, a natural reduction mapping $\Xi$ at $\bar y \in \mathcal{Q}_m$ can be chosen as follows:
\begin{itemize}
\item If $\bar y \in \text{int}(\mathcal{Q}_m)$,  let  $\Xi(y) = 0$ with $\mathcal{C} = \{0\}$.
\item If $\bar y = 0$, let  $\Xi(y) = y$ with $\mathcal{C} = \mathcal{Q}_m$.
\item If $\bar y \in \text{bd}^+(\mathcal{Q}_m)$, let  $\Xi(y) := y_0 - \|y_r\|$ with $y=(y_0, y_r)$ and $\mathcal{C} = \mathbb{R}_+$.
\end{itemize}
We refer to this construction as the {\it natural reduction}  for $\mathcal{Q}_m$.

Consider the following second-order cone program (SOCP):
\begin{equation}\begin{array}{rl}  &  \quad \min\limits_{x\in\mathbb{R}^n}\quad \ \,\quad \ f(x) \\
	& \mbox{subject to}\quad \ \,   g(x)\in \mathcal{Q}_m,
\end{array} \label{SOCP0}\end{equation}
where  both $f: \mathbb{R}^n\to \mathbb{R}$ and $g: \mathbb{R}^n\to \R^m$  are  twice continuously differentiable  around  $\bar x\in \Omega:=\{x\in \R^n\ |\ g(x)\in \mathcal{Q}_m\}$.
 If $\mathcal{Q}_m$ is reducible to $\mathcal{C}$ at $g(\bar x)$ by the reduction function $\Xi$, define $\mathcal{G}:= \Xi\circ g$,  then the reduced
constraint $\mathcal{G}(x)\in \mathcal{C}$ is  equivalent to the original constraint $g(x)\in \mathcal{Q}_m$ in a
neighborhood of $\bar{x}$ with $\mathcal{G}(\bar{x})=0$.  Furthermore, the reduced problem formulated as
\begin{equation}\begin{array}{rl}  &  \quad \min\limits_{x\in\mathbb{R}^n}\quad \ \,\quad \ f(x) \\
	& \mbox{subject to}\quad \ \,   \mathcal{G}(x)\in \mathcal{C},
\end{array} \label{Red-NCP}\end{equation}
is locally equivalent to the original problem \eqref{SOCP0}
around  $\bar{x}$.

To date, the following constraint qualifications still play central roles in both the theoretical and practical study of SOCPs \cite{AG03, BGM19, BS00, Hang20, Hang20b, Mor24}.
		\begin{itemize}
			\item The {\it  nondegeneracy condition} is valid  at $\bar x$ iff 	\begin{equation*}\label{nondegeracy}
				\text{Im}(\nabla g(\bar x))+\text{lin}\left(T_{\mathcal{Q}_m}(g(\bar x))\right) =\R^m,		
			\end{equation*}
		where $\text{lin}\left(T_{\mathcal{Q}_m}(g(\bar x))\right) :=T_{\mathcal{Q}_m}\big(g(\bar x)\big)\cap \left[-T_{\mathcal{Q}_m}(g(\bar x))\right]$.
		\item The {\it Robinson  constraint qualification} (RCQ) is satisfied at $\bar{x}$ iff
		\begin{equation*}\label{2178}
			0\in\text{int} \left( g(\bar x)+ \text{Im}(\nabla g(\bar x))-\mathcal{Q}_m\right).
		\end{equation*}
	\item The {\it metric subregularity constraint qualification} (MSCQ) holds  at $\bar x\in \Omega$ iff  there exist real numbers $\kappa, r>0$ such that
	\[\text{dist}(x, \Omega)\leqslant \kappa \text{dist}\big(g(x), \mathcal{Q}_m\big) \text{ for all } x\in \mathbb{B}_r(\bar x),\]	
		where  the distance from $y\in \R^m$ to $\mathcal{Q}_m$ can be calculated by 
\begin{equation}\label{h312}\text{\rm dist}\left(y, \;\mathcal{Q}_m \right)=\begin{cases}
		0 & \text{\rm if } y\in \mathcal{Q}_m,\\
		\|y\| & \text{\rm if } y\in -\mathcal{Q}_m,\\
		\frac{\sqrt{2}}{2}\left(\|y_r\|-y_0 \right) & \text{\rm if } y\not\in \mathcal{Q}_m\cup \left( -\mathcal{Q}_m\right);
	\end{cases} \end{equation}
see \cite[p. 37]{Hang20} or \cite[Section 3.3.4]{Bauschke90}. 
\end{itemize}

If $\bar x$ is a locally optimal solution of (SOCP) at which RCQ holds, then the set of Lagrange   multipliers  $\Lambda(\bar{x}) :=\{\lambda\in \mathcal{Q}_m^*\ |\ \nabla f(\bar x) + \nabla g(\bar x)^*\lambda = 0, \,  \la g(\bar x), \lambda\ra =0
 \}$ is  non-empty and compact \cite[Theorem 3.9]{BS00}. By  \cite[Proposition  2.97]{BS00}, the validity of RCQ at $\bar x$ is equivalent to
 that  $\text{Im}(\nabla g(\bar x))-T_{\mathcal{Q}_m}\big(g(\bar x)\big) =\R^m.$
  Consequently, the nondegeneracy condition is  stronger than RCQ.  By \cite[Theorem 2.87]{BS00}, RCQ in turn implies MSCQ. Moreover, given the reducibility of $\mathcal{Q}_m$ to $\mathcal{C}$, nondegeneracy at $\bar{x}$ is equivalent to the surjectivity of $\nabla \mathcal{G}(\bar{x})$, which ensures $\Lambda(\bar{x})$ is a singleton when  $\bar x$ is a locally optimal solution  \cite[Proposition 4.75]{BS00}.
 Despite being a weak constraint qualification, the MSCQ  remains essential  for developing second-order variational analysis in second-order cone programming  \cite{Hang20, Hang20b}.

The {\it linearized tangent cone} (or simply {\it linearized/linearization cone}) to $\Omega$ at $\bar{x}\in \Omega$ is the set $L_\Omega(\bar x)$ given by
\begin{equation*}\label{tcone}
	L_\Omega(\bar x):=\big\{d\in\R^n | \, \nabla g(\bar x)d\in T_{\mathcal{Q}_m}(g(\bar x))\},
\end{equation*}
which  provides a commonly used outer approximation of the tangent cone $T_\Omega(\bar x)$. 
This cone  can be represented as
   \begin{equation*}\label{linconefornsocp}
 	L_\Omega(\bar x)=\begin{cases}
   \mathbb{R}^n  &\text{ if } g(\bar x)\in \text{int}(\mathcal{Q}_m),\\
 \left\{ d\in\mathbb{R}^n\ |\  \nabla g(\bar x)d\in \mathcal{Q}_{m}\right\}\quad  &\text{ if } g(\bar x)=0,\\
   \left\{ d\in  \mathbb{R}^n\ |\  \nabla\phi(\bar x)d\in \mathbb{R}_+ \right\} &\text{ if } g(\bar x)\in \text{bd}^+(\mathcal{Q}_m),
      \end{cases}
 \end{equation*}
where $\phi(x):=g_0(x)-\|g_r(x)\|$; see \cite[Lemma 25]{bonnans05}. According to \cite{andreani23b}, we have
$$L_{\Omega}(\bar{x}) = \left\{ d \in \mathbb{R}^n \mid \nabla \mathcal{G}(\bar{x})d \in \mathcal{C} \right\}, \,\  H(\bar x) = \nabla \mathcal{G}(\bar{x})^* \mathcal{C}^*\,\  \mbox{and}\, \ L_{\Omega}(\bar{x})^* = \text{cl} \left( H(\bar{x}) \right),$$
where \begin{equation}\label{setHx}H(\bar x):= \nabla g(\bar x)^*\left[N_{\mathcal{Q}_m}\big(g(\bar x)\big)\right].\end{equation}
Furthermore, if MSCQ holds at $\bar x\in \Omega$ then
$$T_\Omega(\bar x)= L_\Omega(\bar x)\quad \mbox{and}\quad N_\Omega(\bar x)=H(\bar x).$$

Recently, Andreani et al. \cite{andreani23b, andreani23c} extended the classical constant rank constraint qualification from nonlinear programming to broader settings, including second-order cone programming, semidefinite programming, and reducible conic programming. This generalization provides a constant rank constraint qualification (CRCQ) for conic programs that  ensures the fulfillment of the strong second-order necessary conditions for optimality, while remaining independent of the Robinson constraint qualification. In the specific context of second-order cone programming (SOCP), their CRCQ is formulated as follows:

Given any point $\bar x\in\Omega:=\{x\in \R^n\ |\ g(x)\in \mathcal{Q}_m\}$
such that $\mathcal{Q}_m$ is reducible to the cone $\mathcal{C}\subset \mathbb{R}^\ell$ at $g(\bar x)$ by the natural reduction mapping $\Xi$.
\begin{itemize}
\item The {\it facial constant rank (FCR) property} holds at $\bar x$ if there exists a neighborhood $\mathcal{V}$ of $\bar x$ such that, for every $F \unlhd \mathcal{C}$, the dimension of $\nabla \mathcal{G}(x)^*(F^{\perp})$ remains constant for every $x\in\mathcal{V}$.
 \item The {\it constant rank constraint qualification} (CRCQ) holds at $\bar x$ if the FCR property is satisfied at $\bar x$ and, additionally, the set $H(\bar x)$ defined in $\eqref{setHx}$ is closed.	
\end{itemize}

If the FCR property holds at $\bar{x} \in \Omega$, then  $T_{\Omega}(\bar{x}) = L_{\Omega}(\bar{x})$; see \cite[Theorem 3]{andreani23c}. However, this  property alone does not guarantee that the set $H(\bar{x})$ is closed; consequently, it is insufficient to ensure the validity of MSCQ; see \cite[Example 1]{andreani23c}.
Note that, in the nonpolyhedral conic setting, it remains an open question whether the CRCQ implies the MSCQ.
\section{The Facial Constant Rank Property for Affine Second-order Cone Constraints}
\setcounter{equation}{0}
This section investigates the facial constant rank (FCR) property for  affine second-order cone constraints of the form:
\begin{equation}\label{SOCP} g(x):= Ax+b\in \mathcal{Q}_m, \end{equation}
where	$A \in \mathbb{R}^{m \times n}$ and $b \in \mathbb{R}^m$.

Let $\bar{x} \in \mathbb{R}^n$ such that $g(\bar{x}) \in \mathcal{Q}_m$. By utilizing the natural reduction mapping $\Xi$ for $\mathcal{Q}_m$ at $g(\bar x)$, which is defined on some neighborhood $\mathcal{N}$ of $g(\bar{x})$,   the constraint $g(x) \in \mathcal{Q}_m$ is locally represented by $\mathcal{G}(x) \in \mathcal{C}$  in the neighborhood $\mathcal{U} := \{x \mid g(x) \in \mathcal{N}\}$. Furthermore, with the reduced mapping $\mathcal{G} := \Xi \circ g$, the derivative $\nabla \mathcal{G}(x)$ is obtained via the chain rule as follows:
\begin{equation}\label{dhG}
\nabla \mathcal{G}(x)(u) = \nabla \Xi\big(g(x)\big)\circ \nabla g(x)(u) = \nabla \Xi\big(g(x)\big)(Au)\quad \mbox{for all}\ x\in \mathcal{U},\  u\in \R^n.
\end{equation}

Contrary to \cite[Remark 2]{andreani23c}, which suggests the FCR property holds at every feasible point for all affine second-order cone constraints, the following counterexample demonstrates that this property is not always satisfied.
\begin{Counterexample}\label{ce1}{\rm
Let us consider the constraint \eqref{SOCP} with  $m=n=3$,  $b=0\in \R^3$, and  $Ax=(x_1, x_1, x_3)$  for  $x=(x_1, x_2, x_3)\in\mathbb{R}^3$:
 $$g(x):=Ax\in \mathcal{Q}_3.$$
 We see that   $\bar{x} = (1, 0, 0)$ is a feasible point, as it satisfies $\bar{x} \in \Omega := \{x \in \mathbb{R}^3 \;|\; g(x) \in \mathcal{Q}_3\}$.
 Since  $\bar{y}:= g(\bar x)= (1, 1, 0) \in {\rm bd}^+ (\mathcal{Q}_3)$, the second-order cone $\mathcal{Q}_3$ in the three-dimensional space is reducible at $\bar y$ to the cone $\mathcal{C}=\mathbb{R}_+$ in some neighborhood $\mathcal{N}$ of $\bar y$ by the reduction mapping $\Xi: \mathcal{N}\to \mathbb{R}$ given by $\Xi(y)=y_0-\sqrt{y_1^2+y_2^2}$ for every $y=(y_0, y_1, y_2)\in\mathcal{N}$. Note that, in this case,  $\mathcal{N}$ can be chosen as $\mathcal{N}:=\mathbb{R}\times\left(\mathbb{R}^2\setminus\{(0,0)\}\right)$, and
 \begin{equation}\label{eq1}
	\nabla\Xi(y)=\left(1, -\frac{y_1}{\sqrt{y_1^2+y_2^2}}, -\frac{y_2}{\sqrt{y_1^2+y_2^2}} \right)\;  \mbox{ for  every}\; y:=(y_0, y_1, y_2)\in \mathcal{N}.
\end{equation}
Let $\mathcal{G}(x):=\Xi(g(x))=\Xi(Ax)$. Then, combining the chain rule for derivatives with \eqref{eq1}, we get
$$\nabla \mathcal{G}(x)(u)=  \nabla \Xi(Ax)(Au)=\left(1-\frac{x_1}{\sqrt{x_1^2+x_3^2}}\right)u_1 -\frac{x_3}{\sqrt{x_1^2+x_3^2}}u_3.$$
So, choosing $F=\{0\}\subset\R$, we see that   $F \unlhd \mathcal{C}$,  $F^{\perp}=\mathbb{R}$, and hence,
\begin{eqnarray*}
	\nabla \mathcal{G}(x)^*(F^{\perp})&=&\left(A^*\circ \nabla \Xi(Ax)^* \right) \left( \mathbb{R}\right)\\
	&=& {\rm span}\left\lbrace \left(1-\frac{x_1}{\sqrt{x_1^2+x_3^2}},\; 0,\; -\frac{x_3}{\sqrt{x_1^2+x_3^2}} \right) \right\rbrace,
\end{eqnarray*}
for all $x\in \R^3$ with $Ax\in \mathcal{N},$ and  $u\in \R^3$.
Therefore,
\begin{eqnarray*}
	\text{ dim } \nabla\mathcal{G}(x)^*(F^{\perp})
	&=&  \text{ dim }\left( {\rm span}\left\lbrace \left(1-\frac{x_1}{\sqrt{x_1^2+x_3^2}},\; 0,\; -\frac{x_3}{\sqrt{x_1^2+x_3^2}} \right) \right\rbrace\right) \\
	&=& \begin{cases}
		0 \; & \text{ if } \; x_3=0 \text{ and } x_1>0,\\
		1\; & \text{ otherwise},
	\end{cases}
\end{eqnarray*}
where $x=(x_1, x_2, x_3)\in\mathbb{R}^3$ with $x_1^2+x_3^2>0$.
In particular, we observe that $\dim \nabla \mathcal{G}(\bar{x})^*(F^{\perp}) = 0$ and $\dim \nabla \mathcal{G}(x^n)^*(F^{\perp}) = 1$ for the sequence $x^n := (1, 0, \frac{1}{n})$, which converges to $\bar{x}$ as $n \to \infty$. This demonstrates that $\bar{x}$ fails to satisfy the FCR property, notwithstanding the affinity of the constraint function $g$.
}
\end{Counterexample}

Counterexample \ref{ce1} naturally raises the question of which feasible points of \eqref{SOCP} actually satisfy the FCR property. The following theorem provides a characterization of these points.
 \begin{Theorem}\label{cy209}
   Let $\bar{x}$ be a feasible point of \eqref{SOCP}. Then, the FCR property holds  at $\bar{x}$ if and only if one of the following conditions is satisfied:
  \begin{itemize}
  \item[$(i)$] $g(\bar{x}) = 0$;
 \item[$(ii)$] $g(\bar{x}) \in {\rm int}(\mathcal{Q}_m)$;
    \item[$(iii)$] $g(\bar x)\in {\rm bd}^+(\mathcal{Q}_m)$ and the nondegeneracy condition holds at $\bar{x}$;
 \item[$(iv)$] $g(\bar x)\in {\rm bd}^+(\mathcal{Q}_m)$ and the reduced mapping $\mathcal{G}$ vanishes  on a neighborhood of $\bar{x}$.

  \end{itemize}
 \end{Theorem}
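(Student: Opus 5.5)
We split according to the location of $g(\bar x)$, following the three cases of the natural reduction. If $g(\bar x)=0$, then $\Xi(y)=y$ and $\mathcal{C}=\mathcal{Q}_m$, so $\mathcal{G}(x)=Ax+b$ and $\nabla\mathcal{G}(x)^*=A^*$ for every $x$. Hence $\dim A^*(F^\perp)$ does not depend on $x$ for any face $F\unlhd\mathcal{Q}_m$, and FCR holds. If $g(\bar x)\in{\rm int}(\mathcal{Q}_m)$, then $\Xi\equiv 0$ and $\mathcal{C}=\{0\}$. Thus $\nabla\mathcal{G}(x)^*(F^\perp)=\{0\}$ on the neighborhood $\mathcal{U}$, and FCR again holds. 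These two cases settle $(i)$ and $(ii)$, and they show that FCR can only fail when $g(\bar x)\in{\rm bd}^+(\mathcal{Q}_m)$.

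Now let $\bar y:=g(\bar x)\in{\rm bd}^+(\mathcal{Q}_m)$, so that $\mathcal{C}=\mathbb{R}_+$ and $\Xi(y)=y_0-\|y_r\|$. On $\mathcal{U}$ (where $g_r(x)\neq 0$) we have $\nabla\mathcal{G}(x)^*=A^*\nabla\Xi(g(x))^*$, with $\nabla\Xi(y)=(1,-y_r/\|y_r\|)$. The faces of $\mathbb{R}_+$ are $\{0\}$ and $\mathbb{R}_+$. For $F=\mathbb{R}_+$ we have $F^\perp=\{0\}$, which is trivially of constant dimension. For $F=\{0\}$ the relevant quantity is $\dim{\rm span}\{A^*w(x)\}$ with $w(x):=(1,-g_r(x)/\|g_r(x)\|)\neq0$, and this dimension is $0$ or $1$. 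Hence FCR at $\bar x$ is equivalent to one of two alternatives: either $A^*w(x)\neq0$ for all $x$ near $\bar x$, or $A^*w(x)=0$ for all $x$ near $\bar x$. Since $w$ is continuous, $A^*w(\bar x)\neq 0$ already forces the first alternative. It remains to identify these alternatives with $(iii)$ and $(iv)$.

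For $(iii)$: by \eqref{t2ocone}, ${\rm lin}\,T_{\mathcal{Q}_m}(\bar y)=\tilde{\bar y}^{\perp}$, a hyperplane. Nondegeneracy ${\rm Im}A+\tilde{\bar y}^\perp=\mathbb{R}^m$ therefore holds iff $\tilde{\bar y}\notin({\rm Im}A)^\perp={\rm Ker}A^*$, that is, iff $A^*\tilde{\bar y}\neq0$. Since $-\tilde{\bar y}=(\bar y_0,-\bar y_r)$ is a positive multiple of $w(\bar x)$ (because $\bar y_0=\|\bar y_r\|>0$), nondegeneracy is equivalent to $A^*w(\bar x)\neq 0$, i.e.\ $\nabla\mathcal{G}(\bar x)\neq0$. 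So $(iii)$ gives FCR with constant dimension $1$. For $(iv)$: if $\mathcal{G}\equiv0$ near $\bar x$, then $\nabla\mathcal{G}\equiv0$ there, so the dimension is constantly $0$.

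The main step, which I expect to be the obstacle, is the converse in the degenerate case. Assume $A^*w(\bar x)=0$ and that FCR holds; then $\nabla\mathcal{G}(x)=0$ for all $x$ in a convex neighborhood $\mathcal{V}\subset\mathcal{U}$ of $\bar x$. Since $\mathcal{G}$ is $C^1$ on $\mathcal{V}$ (because $g_r\neq0$ there), $\mathcal{G}$ is constant on $\mathcal{V}$, equal to $\mathcal{G}(\bar x)=\Xi(\bar y)=0$. This gives $(iv)$. Hence, whenever $(iii)$ fails while $\bar y\in{\rm bd}^+(\mathcal{Q}_m)$, FCR forces $(iv)$, which completes the equivalence. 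For safety, I would also check via Counterexample~\ref{ce1} that the case $A^*w(\bar x)=0$ with $\mathcal{G}\not\equiv0$ genuinely occurs, so that $(iv)$ cannot be dropped from the statement nor absorbed into $(iii)$.
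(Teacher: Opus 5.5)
Your proposal is correct and follows the paper's proof. It uses the same case split via the natural reduction and the same converse: failure of nondegeneracy gives $\nabla\mathcal{G}(\bar x)=0$, FCR with $F=\{0\}$ forces dimension $0$ on a convex neighborhood, and hence $\mathcal{G}\equiv\mathcal{G}(\bar x)=0$ there. The only difference is in $(iii)$, where you prove directly that nondegeneracy means $A^*w(\bar x)\neq0$ (via ${\rm lin}\,T_{\mathcal{Q}_m}(\bar y)=\tilde{\bar y}^{\perp}$ and continuity of $w$), whereas the paper simply cites that nondegeneracy implies CRCQ and uses the known equivalence of nondegeneracy with surjectivity of $\nabla\mathcal{G}(\bar x)$.
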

\noindent {\it Proof}. Suppose first that one of the following conditions $(i)$ through $(iv)$ is satisfied. We aim to show that the FCR property holds at $\bar{x}$.

{\it Case 1.1}:  $g(\bar x)=0$. Then, the reduction mapping $\Xi$ is the identity on $\mathbb{R}^m$, and the reduced cone $\mathcal{C}$ coincides with $\mathcal{Q}_m$. Hence, $\mathcal{G}$ is an affine mapping, and $\nabla \mathcal{G}(x)$ does not depend on $x$. Consequently,  for every $F \unlhd \mathcal{C}$, the dimension of  $\nabla g(x)^*(F^{\perp})$ is constant in a neighborhood of $\bar{x}$.  This shows that the FCR property holds at $\bar x$.

{\it Case 1.2}:  $g(\bar{x}) \in {\rm int}(\mathcal{Q}_m)$.  In this case,  the reduction mapping $\Xi$ is the zero map, and the reduced cone $\mathcal{C}$ is trivial, i.e., $\mathcal{C} = \{0\}$. Therefore, $\mathcal{G}$ is  a zero mapping, and $\nabla \mathcal{G}(x)$ does not depend on $x$. It follows that  for every $F \unlhd \mathcal{C}$, the dimension of  $\nabla g(x)^*(F^{\perp})$ is constant in a neighborhood of $\bar{x}$.  So, the FCR property holds at $\bar x$.

{\it Case 1.3:} $g(\bar x)\in {\rm bd}^+(\mathcal{Q}_m)$  and the nondegeneracy is valid  at $\bar{x}$. Then, given that nondegeneracy is a stronger condition than CRCQ \cite[p.~492]{andreani23c},  the FCR property holds at $\bar{x}$ by definition.

{\it Case 1.4:}  $g(\bar x)\in {\rm bd}^+(\mathcal{Q}_m)$  and the reduced mapping $\mathcal{G}$ vanishes on a neighborhood of $\bar{x}$. Then, $\nabla \mathcal{G}(x) = 0$ for all $x$ in a neighborhood of $\bar{x}$. Consequently, for every face $F \unlhd \mathcal{C}$, the dimension of  $\nabla\mathcal{G}(x)^*(F^{\perp})$ is zero for every $x$ sufficiently close to $\bar{x}$. This ensures that the FCR property is satisfied at $\bar{x}$.

Conversely, suppose that the FCR property holds at $\bar{x}$ and that conditions $(i)$ through $(iii)$ are not satisfied. We need to  show  that condition $(iv)$ must hold. Since conditions $(i)$ through $(iii)$ fail to hold, it follows that $(\bar y_0, \bar y_r)=\bar y:=g(\bar{x}) \in \text{bd}^+(\mathcal{Q}_m)$ and the nondegeneracy condition is not satisfied at $\bar{x}$. By the FCR property at $\bar{x}$, there exists a neighborhood $\mathcal{V}$ of $\bar{x}$ such that for every face $F \unlhd \mathcal{C}$, the dimension of $\nabla \mathcal{G}(x)^*(F^{\perp})$ is constant for all $x \in \mathcal{V}$. Here, $\mathcal{G} := \Xi \circ g$ is the reduced mapping with $\Xi(y) := y_0 - \|y_r\|$ defined on a neighborhood $\mathcal{N}$  of $g(\bar{x})$, and the reduced cone is $\mathcal{C} = \mathbb{R}_+$.
 Furthermore,  by shrinking $\mathcal{V}$ and $\mathcal{N}$, we may assume $\mathcal{V}$ is a convex neighborhood of $\bar{x}$ contained in $\mathcal{U} := \{x\in\mathbb{R}^n|\; g(x) \in \mathcal{N}\}$, and  $y_r \neq 0$ for all $y = (y_0, y_r) \in \mathcal{N}$. This yields
\[\nabla\Xi(y)=\left( 1, -\frac{y_r}{\|y_r\|}\right)  \ \mbox{for all}\   y =(y_0, y_r)\in \mathcal{N}.\]
Set $F=\{0\}\subset \R$.
 Then, $F  \unlhd  \mathcal{C}$ and
 $F^{\perp}=\mathbb{R}$.
 Since the nondegeneracy condition fails at $\bar x$,  the linear mapping  $\nabla \mathcal{G}(\bar x): \R^n\to \R$ is not  surjective. This implies that $\nabla\mathcal{G}(\bar x)=0$, and thus $\nabla\mathcal{G}(\bar x)^*=0.$
  Consequently, we find that $$\dim \nabla \mathcal{G}(\bar{x})^*(F^{\perp}) = \dim \nabla \mathcal{G}(\bar{x})^*(\mathbb{R}) = 0.$$ Since  $\dim \nabla \mathcal{G}(x)^*(F^{\perp})$ is constant on the neighborhood $\mathcal{V}$, it follows that
  $$\dim \nabla \mathcal{G}(x)^*(\mathbb{R}) = \dim \nabla\mathcal{G}(x)^*(F^{\perp}) = 0,$$ for all $x \in \mathcal{V}$.  This implies that  $\nabla\mathcal{G}(x) = 0$ for all $x \in \mathcal{V}$. So, given the convexity of $\mathcal{V}$ and the fact that $\mathcal{G}(\bar{x}) = 0$, we conclude that $\mathcal{G}(x) = 0$ for all $x \in \mathcal{V}$.
\endproof

We now turn to the local preservation of the FCR property. By local preservation, we mean that if the property holds at a feasible point $\bar{x}$, it  also holds at every feasible point near $\bar{x}$.  The following proposition establishes that the FCR property does not, in general, possess this stability.

\begin{Proposition} The facial constant rank property for affine second-order cone constraints of the form \eqref{SOCP} is not locally preserved. \end{Proposition}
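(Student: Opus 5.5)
The plan is to give a single explicit counterexample: an affine constraint $g(x)=Ax+b\in\mathcal{Q}_m$, a feasible point $\bar x$ where the FCR property holds, and a sequence of feasible points $x^k\to\bar x$ where it fails. Theorem \ref{cy209} reduces this to finding $\bar x$ that satisfies one of conditions $(i)$--$(iv)$, together with feasible $x^k\to\bar x$ that satisfy none of them. The cheapest choice is condition $(i)$, $g(\bar x)=0$. There the FCR property holds automatically, and nearby feasible points can lie in ${\rm bd}^+(\mathcal{Q}_m)$. So I want to arrange that at those boundary points both nondegeneracy and condition $(iv)$ fail.

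I would reuse the mapping of Counterexample \ref{ce1}: $m=n=3$, $b=0$, $Ax=(x_1,x_1,x_3)$, and take $\bar x=(0,0,0)$. Then $g(\bar x)=0$, so FCR holds at $\bar x$ by Theorem \ref{cy209}$(i)$. Set $x^k:=(1/k,0,0)$. These points are feasible and converge to $\bar x$, with $g(x^k)=(1/k,1/k,0)\in{\rm bd}^+(\mathcal{Q}_3)$, so conditions $(i)$ and $(ii)$ fail at $x^k$.

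Next I would check $(iii)$ and $(iv)$ at each $x^k$, using the formula for $\nabla\mathcal{G}$ derived in Counterexample \ref{ce1}. At $x^k$ we have $x_3=0$ and $x_1>0$, so
$$\nabla\mathcal{G}(x^k)(u)=\Big(1-\tfrac{x_1}{|x_1|}\Big)u_1-0\cdot u_3=0.$$
Hence $\nabla\mathcal{G}(x^k)$ is not surjective onto $\mathbb{R}$, and as noted in the proof of Theorem \ref{cy209} this is exactly the failure of nondegeneracy, so $(iii)$ fails. For $(iv)$, the points $z^j:=(1/k,0,1/j)$ converge to $x^k$, and
$$\mathcal{G}(z^j)=\tfrac1k-\sqrt{1/k^2+1/j^2}<0\neq 0.$$
So $\mathcal{G}$ does not vanish on any neighborhood of $x^k$, and $(iv)$ fails. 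Theorem \ref{cy209} then says FCR fails at every $x^k$. Alternatively, the argument of Counterexample \ref{ce1} can be repeated verbatim at $x^k$: the rank is $0$ at $x^k$ and $1$ at $z^j$.

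There is one subtle point. The reduction mapping at $x^k$ is the natural reduction at $g(x^k)$, not the one at $\bar x$, so the face $F=\{0\}$ of $\mathcal{C}=\mathbb{R}_+$ must be taken relative to the reduction at $g(x^k)$. Once this is noted the computation goes through, and the conclusion is that every neighborhood of $\bar x$ contains feasible points where FCR fails, so the property is not locally preserved.
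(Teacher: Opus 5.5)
Your proof is correct and is essentially the paper's argument. You use the same constraint $Ax=(x_1,x_1,x_3)$, the same base point $\bar x=0$ (where FCR holds by Theorem \ref{cy209}$(i)$), the same feasible sequence $(1/k,0,0)$ and the same nearby witnesses $(1/k,0,1/j)$. The one difference is that you certify failure at $x^k$ by checking that conditions $(iii)$ and $(iv)$ of Theorem \ref{cy209} fail there, while the paper directly exhibits the rank jump from $0$ at $x^k$ to $1$ at the nearby points, which is the alternative you also mention.
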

\noindent {\it Proof}. Consider the affine second-order cone constraint  as defined in Counterexample \ref{ce1}:
 $$g(x) := Ax \in \mathcal{Q}_3,$$
 where  $Ax = (x_1, x_1, x_3)$ for $x=(x_1,x_2,x_3)\in \R^3$.  Let  $\bar x= (0, 0, 0)$ and  $\Omega:=\{x\in\mathbb{R}^3\;| \; g(x)\in \mathcal{Q}_3\}$. We see that
 $$\Omega=\left\{x\in \mathbb{R}^3\;|\; Ax\in\mathcal{Q}_3\right\}=\left\{(x_1, x_2, x_3)\in \mathbb{R}^3\;\big|\;x_1\geqslant\sqrt{x_1^2+x_3^2}\right\}=\mathbb{R}_{+}\times\mathbb{R}\times\{0\}.$$
 Since $g(\bar x)=0$, by Theorem $\ref{cy209}$, the FCR property holds at $\bar x$.
Put $x^{n}=\left(\frac{1}{n}, 0, 0 \right)$ for $n\in \mathbb{N}^*$. Then $\{x^{n}\} \subset \Omega$ converges to $\bar{x}$.
As $g(x^{n})=\left(\frac{1}{n}, \frac{1}{n}, 0 \right)\in \text{bd}^+(\mathcal{Q}_3)$ for every $n\in \N^*$, the second-order cone $\mathcal{Q}_3$ in the three-dimensional space $\R^3$  is reducible at $g(x^{n})$ to the cone $\mathcal{C}=\mathbb{R}_+$  by the reduction mapping $\Xi: \mathcal{N}\to \mathbb{R}$ given by $\Xi(y)=y_0-\sqrt{y_1^2+y_2^2}$ for every $y=(y_0, y_1, y_2)\in\mathcal{N},$
where $\mathcal{N}=\{y=(y_0,y_1,y_2)\ | \ y_1^2+y_2^2\not=0\}$ is a neighborhood of $g(x^n)$ for every $n\in \N^*$.
Taking $F=\{0\}\unlhd\mathcal{C}$ and proceeding as in Counterexample \ref{ce1}, we find that:
\begin{eqnarray*}
	\text{ dim } \nabla \mathcal{G}(x)^*(F^{\perp})
	&=& \begin{cases}
		0 \; & \text{ if } \; x_3=0 \text{ and } x_1>0,\\
		1\; & \text{ otherwise},
	\end{cases}
\end{eqnarray*}
where $x=(x_1, x_2, x_3)\in\mathbb{R}^3$ with $x_1^2+x_3^2>0$. Hence,  $\text{ dim } \nabla \mathcal{G}(x^{n})^*(F^{\perp})=0$ for every $n\in \mathbb{N}^*$.
On the other hand, defining $x^{n, k} := \left(\frac{1}{n}, 0, \frac{1}{k} \right)$, we have $\dim \nabla \mathcal{G}(x^{n, k})^*(F^{\perp}) = 1$ for all $n, k \in \mathbb{N}^*$, and $x^{n, k} \to x^n$ as $k \to \infty$. Therefore,  the FCR property is not satisfied at any $x^n$.
Consequently, although the FCR property is satisfied at $\bar{x}$, it fails at every point of the sequence $\{x^n\}$. Since $x^n \to \bar{x}$ as $n \to \infty$, this demonstrates that the FCR property is not locally preserved at $\bar{x}$.
\endproof
\section{Characterization of CRCQ for  Affine Second-order Cone Constraints}
This section aims to establish a verifiable necessary and sufficient condition for the CRCQ to hold at a feasible point of the affine second-order cone constraint \eqref{SOCP}. To achieve this, we leverage the characterization of closed linear images of closed convex cones provided in \cite[Theorem 1]{Pataki07}. Utilizing this result alongside the geometry of the second-order cone, we derive the following characterization for the closedness of the set $H(\bar{x})$, as defined  in \eqref{setHx}.
\begin{Theorem}\label{cy1210}
	  Let $\bar{x}$ be a feasible point of \eqref{SOCP} and
 $H(\bar{x}):=\nabla g(\bar x)^*\left[N_{\mathcal{Q}_m}\big(g(\bar x)\big)\right].$
Then, the set $H(\bar{x})$ is closed if and only if one of the following conditions holds:
\begin{itemize}	
	\item[$(i)$] $g(\bar x)\in \mathcal{Q}_m\setminus\{0\}$;
	\item[$(ii)$] $g(\bar x)=0$ and $\text{\rm Im}(A)\cap \text{\rm int}(\mathcal{Q}_m)\neq\emptyset$;
	\item[$(iii)$] $g(\bar x)=0$ and ${\rm Im}(A)\cap \mathcal{Q}_m=\{0\}$;
	\item[$(iv)$] $g(\bar x)=0$ and $\text{\rm Im}(A)=\mathbb{R}v$ for some $v\in \text{\rm bd}^{+}(\mathcal{Q}_m)$.
	\end{itemize}
\end{Theorem}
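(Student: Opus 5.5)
Since $\nabla g(\bar x)=A$, we have $H(\bar x)=A^*\big[N_{\mathcal{Q}_m}(g(\bar x))\big]$, and we split according to the position of $g(\bar x)$, using \eqref{n2ocone}.

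If $g(\bar x)\in \text{int}(\mathcal{Q}_m)$, then $H(\bar x)=\{0\}$. If $g(\bar x)\in \text{bd}^+(\mathcal{Q}_m)$, then $H(\bar x)=A^*(\mathbb{R}_+\widetilde{y})=\mathbb{R}_+A^*\widetilde{y}$ with $\widetilde{y}=\widetilde{g(\bar x)}$. Both sets are closed: a single ray or the origin. This gives sufficiency of $(i)$, and it shows that for the remaining analysis we may assume $g(\bar x)=0$. In that case $H(\bar x)=A^*(-\mathcal{Q}_m)=-A^*\mathcal{Q}_m$, using $\mathcal{Q}_m^*=-\mathcal{Q}_m$ in the paper's polar convention. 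So the question is exactly whether the linear image $A^*\mathcal{Q}_m$ is closed.

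For this we invoke \cite[Theorem 1]{Pataki07}. Since $\mathcal{Q}_m$ is nice, $A^*\mathcal{Q}_m$ is closed iff $\text{Im}(A)\cap\big(\text{ri}(F)\text{-}\text{related tangent directions}\big)$ is empty. Concretely: let $F$ be the minimal face of $\mathcal{Q}_m$ containing $\text{Im}(A)\cap\mathcal{Q}_m$, and take $s\in \text{ri}(F)$. Then $A^*\mathcal{Q}_m$ is closed iff
\[
\text{Im}(A)\cap\big(\text{dir}(s,\mathcal{Q}_m)\setminus T\big)=\emptyset ,
\]
where $T$ is the relevant tangent-type set, in Pataki's formulation $\text{cl}\,\text{dir}(s,\mathcal{Q}_m)\setminus\text{dir}(s,\mathcal{Q}_m)$. (One should be careful to apply it to $\mathcal{Q}_m$ with $M=A^*$, so that the relevant range space is $\text{Im}(A)$.) Since $F$ is a face, it is one of three types, and we treat them in turn.

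\textbf{(a) $F=\{0\}$.} This happens iff $\text{Im}(A)\cap\mathcal{Q}_m=\{0\}$. Then $s=0$, $\text{dir}(0,\mathcal{Q}_m)=\mathcal{Q}_m$ is closed, and the condition holds automatically. This gives $(iii)$.

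\textbf{(b) $F=\mathcal{Q}_m$.} This happens iff $\text{Im}(A)$ meets $\text{int}(\mathcal{Q}_m)$. Then $s$ is interior, $\text{dir}$ is all of $\mathbb{R}^m$, and closedness holds. This gives $(ii)$. Alternatively, this case is a Slater-type condition and closedness can be verified directly.

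\textbf{(c) $F=\mathbb{R}_+v$ with $v\in\text{bd}^+(\mathcal{Q}_m)$.} Here $\text{Im}(A)\cap\mathcal{Q}_m=\mathbb{R}_+v$. Take $s=v$. Then $\text{dir}(v,\mathcal{Q}_m)=\{d\mid \langle\widetilde v,d\rangle<0\}\cup\mathbb{R}v$, while its closure is the half-space $\{d\mid\langle \widetilde v,d\rangle\le 0\}$, by \eqref{t2ocone}. The bad set is therefore $\{d\mid \langle\widetilde v,d\rangle=0\}\setminus\mathbb{R}v$.

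We must show that $\text{Im}(A)$ avoids this set iff $\text{Im}(A)=\mathbb{R}v$. If $\text{Im}(A)=\mathbb{R}v$, then clearly no bad direction lies in it. Conversely, suppose $\text{Im}(A)\supsetneq\mathbb{R}v$, and pick $w\in \text{Im}(A)\setminus\mathbb{R}v$. If $\langle\widetilde v,w\rangle=0$, we are done, since $w$ is bad. Otherwise, adjust $w$ by a multiple of $v$ — note $\langle \widetilde v,v\rangle=0$ — which does not change $\langle\widetilde v,w\rangle$, so instead we argue by sign. If $\langle\widetilde v,w\rangle<0$, then $v+tw\in\text{int}(\mathcal{Q}_m)$ for small $t>0$. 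If it is $>0$, use $-w$ instead. Either way $\text{Im}(A)$ meets $\text{int}(\mathcal{Q}_m)$, contradicting $F=\mathbb{R}_+v$. Hence $w$ must be bad, and closedness fails. This gives $(iv)$, together with the failure of closedness in every other configuration.

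\textbf{Main obstacle.} The step needing the most care is quoting \cite[Theorem 1]{Pataki07} in the correct form: the roles of $M$ versus $M^*$, the role of the dual cone, and the exact description of $\text{dir}(s,\mathcal{Q}_m)$ at a boundary point. If that is awkward, a direct fallback for case (c) is to construct, from a bad $w$, an explicit sequence in $A^*\mathcal{Q}_m$ converging to a point outside it, in the standard non-closed-image style.
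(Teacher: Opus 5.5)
Your proof is correct. Its skeleton matches the paper's. Both split into interior, $\text{bd}^+(\mathcal{Q}_m)$ and origin. At the origin, both appeal to Pataki's closedness theorem for the nice cone $\mathcal{Q}_m$ and then split three ways according to the minimal face of $\text{Im}(A)\cap\mathcal{Q}_m$.

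The difference is which equivalent condition of Pataki's Theorem 1.1 you verify. The paper checks the conjugate-face form $A^*(\mathcal{Q}_m\cap F_0^\perp)=A^*(F_0^\perp)$. You check the bad-direction form $\text{Im}(A)\cap\bigl(\text{cl}\,\text{dir}(s,\mathcal{Q}_m)\setminus\text{dir}(s,\mathcal{Q}_m)\bigr)=\emptyset$.

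Your choice pays off when $\text{Im}(A)\cap\mathcal{Q}_m=\{0\}$. There $\text{dir}(0,\mathcal{Q}_m)=\mathcal{Q}_m$ is already closed, so nothing remains to prove. The paper instead needs Pataki's Theorem 5.1 to obtain $\mathcal{Q}_m+\ker(A^*)=\mathbb{R}^m$.

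In the ray case the geometric content is the same, packaged differently. The paper computes $\mathcal{Q}_m\cap v^\perp$ and then argues with kernels and ranges. You show by perturbation that every $w\in\text{Im}(A)$ satisfies $\langle\widetilde{v},w\rangle=0$, since otherwise $v\pm tw$ would enter $\text{int}(\mathcal{Q}_m)$. Hence any $w\in\text{Im}(A)\setminus\mathbb{R}v$ is a bad direction. This is arguably more transparent than the paper's linear algebra.

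A few slips should be fixed.

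(1) The display you wrote is literally wrong. With $T=\text{cl}\,\text{dir}\setminus\text{dir}$, the condition $\text{Im}(A)\cap(\text{dir}(s,\mathcal{Q}_m)\setminus T)=\emptyset$ reduces to $\text{Im}(A)\cap\text{dir}(s,\mathcal{Q}_m)=\emptyset$. That never holds, since $0$ lies in both sets. State the criterion in the bad-direction form above, which is what you actually use in (a)--(c).

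(2) In Pataki's notation take $M=A$, not $M=A^*$. Take $C=C^*=\mathcal{Q}_m$ under his nonnegative-dual convention. Then $M^*C^*=A^*\mathcal{Q}_m$ and $\mathcal{R}(M)=\text{Im}(A)$, as you want.

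(3) Pataki's hypothesis is $s\in\text{ri}(\text{Im}(A)\cap\mathcal{Q}_m)$, not $s\in\text{ri}(F)$. This is harmless, because $\text{dir}(s,\mathcal{Q}_m)=\mathcal{Q}_m+\text{span}(F)$ depends only on $F$. In any case you can choose $s\in\text{Im}(A)$ in each of (a)--(c).

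(4) Your formula for $\text{dir}(v,\mathcal{Q}_m)$ does not follow from \eqref{t2ocone} alone, which only gives its closure. You still need to check that the only feasible directions in the hyperplane $\widetilde{v}^{\perp}$ are multiples of $v$. For instance, write $\text{dir}(v,\mathcal{Q}_m)=\mathcal{Q}_m+\mathbb{R}v$ and use Cauchy--Schwarz to get $\mathcal{Q}_m\cap\widetilde{v}^{\perp}=\mathbb{R}_+v$. This is exactly the mirror image, under $(y_0,y_r)\mapsto(y_0,-y_r)$, of the paper's computation of $\mathcal{Q}_m\cap v^\perp$.
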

\noindent {\it Proof}.  Since $\mathcal{Q}_m$ is reducible to $\mathcal{C}$ at $g(\bar{x})$ via the mapping $\Xi$, with $\mathcal{G} = \Xi \circ g$, it follows that $H(\bar{x}) = \nabla \mathcal{G}(\bar{x})^*(\mathcal{C}^*)$. We proceed by  examining the following cases.

 {\it Case 1:} $g(\bar x)\in {\rm int}(\mathcal{Q}_m)$. Then, we have $N_{\mathcal{Q}_m}\big(g(\bar{x})\big) = \{0\}$, which implies that
 $$H(\bar{x}) := \nabla g(\bar{x})^* [N_{\mathcal{Q}_m}\big(g(\bar{x})\big)] = \{0\}.$$ Consequently, $H(\bar{x})$ is trivially closed.

 {\it Case 2:} $g(\bar x)\in{\rm bd}^{+}(\mathcal{Q}_m)$. Then, the reduction mapping is given by $\Xi(y) = y_0 - \|y_r\|$ for $y = (y_0, y_r) \in \mathbb{R}^m$, with the reduced cone $\mathcal{C} = \mathbb{R}_+ $. In this case, since $H(\bar{x})$ is the image of the polyhedral cone $\mathcal{C}^*$ under the linear mapping $\nabla \mathcal{G}(\bar{x})^*$, it follows that $H(\bar{x})$ is polyhedral and, therefore, closed.

 {\it Case 3:} $g(\bar x)=0$. Let $\bar y \in \text{ri}(\text{Im}(A) \cap \mathcal{Q}_m)$ and let $F_0$ denote the minimal face of $\mathcal{Q}_m$ containing $\bar y$. Since $g(\bar x) = 0$, the reduction mapping $\Xi$ of $\mathcal{Q}_m$ at $g(\bar x)$ is  the identity function on $\R^m$  and $\mathcal{C} = \mathcal{Q}_m$. Recall that $\mathcal{Q}_m$ is a nice cone (see \cite[p. 399]{Pataki07}).  So, by \cite[Theorem 1.1]{Pataki07},  $H(\bar x)$ is closed if and only if
\begin{equation}\label{cy1210b}
A^*[\mathcal{Q}_m \cap F_0^{\perp}] = A^*[F_0^{\perp}].
\end{equation}
Since $\text{Im}(A)$ is a subspace and $0 \in \text{Im}(A) \cap \mathcal{Q}_m$, exactly one of the following possibilities must hold: $\text{Im}(A) \cap \text{int}(\mathcal{Q}_m) \neq \emptyset$, $\text{Im}(A) \cap \mathcal{Q}_m = \{0\}$,  or $\text{Im}(A) \cap \mathcal{Q}_m = \mathbb{R}_+ v$ for some $v \in \text{bd}^+(\mathcal{Q}_m)$. Furthermore, we see that
\[{\rm ri}({\rm Im}(A)\cap \mathcal{Q}_m)=\begin{cases}
\text{\rm Im}(A)\cap \text{\rm int}(\mathcal{Q}_m)  & \text{if} \quad  \text{\rm Im}(A)\cap \text{\rm int}(\mathcal{Q}_m)\neq\emptyset;\\
\{0\} & \text{if}  \quad {\rm Im}(A)\cap \mathcal{Q}_m=\{0\};\\
	\{tv|\; t>0\}  & \text{if}  \quad \text{\rm Im}(A)\cap \mathcal{Q}_m=\mathbb{R}_{+}v \;\text{for some}\; v\in {\rm bd}^{+}(\mathcal{Q}_m),
\end{cases}\]
and thus,
\[F_0=\begin{cases}
	\mathcal{Q}_m & \text{if} \quad  \text{\rm Im}(A)\cap \text{\rm int}(\mathcal{Q}_m)\neq\emptyset;\\
	\{0\} & \text{if}  \quad {\rm Im}(A)\cap \mathcal{Q}_m=\{0\};\\
	\{tv|\; t\geq 0\}  & \text{if}  \quad \text{\rm Im}(A)\cap \mathcal{Q}_m=\mathbb{R}_{+}v \;\text{for some}\; v\in {\rm bd}^{+}(\mathcal{Q}_m).
\end{cases}\]
\par\quad \quad{\it Case 3.1:} $g(\bar x)=0$ and $\text{\rm Im}(A)\cap \text{\rm int}(\mathcal{Q}_m)\neq\emptyset$. Then,  the minimal face is the entire cone, $F_0 = \mathcal{Q}_m$, which implies that $F_0^{\perp} = \{0\}$. It follows immediately that the equality \eqref{cy1210b} holds, and thus $H(\bar{x})$ is closed.
\par\quad \quad{\it Case 3.2:} $g(\bar x)=0$ and ${\rm Im}(A)\cap \mathcal{Q}_m=\{0\}.$
Then $F_0 =\{0\}$ and $\text{ri}({\rm Im}(A)\cap \mathcal{Q}_m)=\text{ri}(\{0\})=\{0\}.$   Hence, $0\in \text{ri}({\rm Im}(A)\cap \mathcal{Q}_m)$.
We observe that $\text{dir}(0, \text{Im}(A)) = \text{Im}(A)$ and $\text{dir}(0, \mathcal{Q}_m) = \mathcal{Q}_m$; furthermore, both $\text{Im}(A)$ and $\mathcal{Q}_m$ are closed.
This implies that  $$\text{dir}(0, \text{Im}(A)) \cap \text{dir}(0, \mathcal{Q}_m)=\text{Im}(A)\cap \mathcal{Q}_m= \text{cl}(\text{dir}(0, \text{Im}(A)))\cap \text{cl}(\text{dir}(0, \mathcal{Q}_m)).$$
 So, noting that $\text{Im}(A)$ and $\mathcal{Q}_m$ are nice cones, by \cite[Theorem 5.1]{Pataki07}, we see that
 $\mathcal{Q}_m^*+ \text{Im}(A)^*$ is closed.  Given that $\text{Im}(A)$ is a subspace, it follows that $\text{Im}(A)^* = \text{Im}(A)^\bot$. On the other hand, $\text{Im}(A)^\bot = \ker(A^*)$ by \cite[p. 155]{Luenb69}. Therefore, the sum  $\mathcal{Q}_m+ \ker(A^*)=-[\mathcal{Q}_m^*+ \text{Im}(A)^*]$ is closed.
Consequently, we have
$$\mathcal{Q}_m + \ker(A^*) = \text{cl}(-\mathcal{Q}_m^* - \text{Im}(A)^*) = -(\mathcal{Q}_m \cap \text{Im}(A))^* = \{0\}^* = \mathbb{R}^m,$$where the second equality follows from \cite[p. 398]{Pataki07}. This  implies that\begin{equation}\label{cy1310}A^*[\mathcal{Q}_m] = A^*[\mathbb{R}^m].\end{equation} Furthermore, since $F_0^{\perp} = \mathbb{R}^m$, \eqref{cy1310} is equivalent to \eqref{cy1210b}, justifying the closedness of $H(\bar{x})$.
\par\quad \quad{\it Case 3.3:} $g(\bar x)=0$ and $\text{\rm Im}(A)\cap \mathcal{Q}_m=\mathbb{R}_{+}v$ for some $v\in \text{\rm bd}^{+}(\mathcal{Q}_m)$.
 In this case, $F_0 = \{tv : t \geq 0\}$, which implies that $F_0^{\perp} = v^{\perp}$. Consequently, the condition \eqref{cy1210b} reduces to
\begin{equation}\label{cy1310c}
A^*(\mathcal{Q}_m \cap v^{\perp}) = A^*(v^{\perp}),
\end{equation}
which characterizes the closedness of $H(\bar{x})$.

We next prove that    \begin{equation}\label{cy1510}\mathcal{Q}_m \cap v^{\perp} = \{t(-v_0, v_r) \;|\; t \leq 0\},\end{equation}
 where $v = (v_0, v_r) \in \text{bd}^{+}(\mathcal{Q}_m)$.
 Take any  $u = (u_0, u_r) \in \mathcal{Q}_m \cap v^{\perp}$. Then   $u_0 v_0 + \langle u_r, v_r \rangle = 0$.
  This along with $v \in \text{bd}^{+}(\mathcal{Q}_m)$ shows that  $v_0 = \|v_r\| > 0$ and $\langle u_r, v_r \rangle = -u_0 \|v_r\|$. Hence,  using the Cauchy–Schwarz inequality, we have
  $$-u_0 \|v_r\| = \langle u_r, v_r \rangle \geq -\|u_r\|\|v_r\|,$$
   which yields $u_0 \leq \|u_r\|$ since $\|v_r\| > 0.$
    On the other hand, since $u \in \mathcal{Q}_m$, we get  $u_0 \geq \|u_r\|$.  Therefore, $u_0 = \|u_r\|$.
    Consequently,
    $$\langle u_r, v_r \rangle =-u_0v_0= -\|u_r\|\|v_r\|.$$
    This implies that  $u_r = t v_r$ for some $t \leq 0$.  Specifically, $t = -\|u_r\|/\|v_r\|$, from which it follows that $u_0 = \|u_r\| = -t\|v_r\| = -t v_0$. Thus, $u = t(-v_0, v_r)$ for some $t \leq 0$. This shows that
    \begin{equation}\label{cy1510a}\mathcal{Q}_m \cap v^{\perp} \subset \{t(-v_0, v_r) : t \leq 0\}.\end{equation}
Conversely, take $u=t(-v_0, v_r)$ for some  $t\leq 0$. Then, since $v_0=\|v_r\|$, we get
$$\la u, v\ra = t(-v_0^2+\|v_r\|^2)=0.$$ Furthermore, since $u_0=-tv_0=-t\|v_r\|=\|tv_r\|=\|u_r\|$, it follows that $u\in \text{bd}(\mathcal{Q}_m)\subset \mathcal{Q}_m$. Hence, $u\in \mathcal{Q}_m\cap v^{\perp}$. This shows that
\begin{equation}\label{cy1510b}\{t(-v_0, v_r) \;|\; t \leq 0\}\subset \mathcal{Q}_m \cap v^{\perp}.\end{equation}
From  \eqref{cy1510a} and \eqref{cy1510b} it follows that $\eqref{cy1510}$ holds.
So, the condition \eqref{cy1310c}
is equivalent to  \begin{equation}\label{cy1510d}\{ tA^*(-v_0, v_r)|\; t\leq 0\}=A^*(v^{\perp}).\end{equation}

Suppose that \eqref{cy1510d} holds. Then, since $A^*(v^{\perp})$ is a subspace, we get  $A^*(-v_0, v_r) = 0$, which yields $A^*(v^{\perp}) = \{0\}$.
 Thus, $v^{\perp} \subset \ker(A^*)$. Given that $v^{\perp}$ is a hyperplane and $\ker(A^*)$ is a subspace, we must have either $\ker(A^*) = \mathbb{R}^m$ or $\ker(A^*) = v^{\perp}$.  If $\ker(A^*) = \mathbb{R}^m$, then $\text{Im}(A) = \{0\}$. This is impossible since $\text{Im}(A) \cap \mathcal{Q}_m = \mathbb{R}_{+}v$. Consequently, we have $\ker(A^*) = v^{\perp}$. It then follows from \cite[Theorem 3, p. 157]{Luenb69} that
 $$\text{Im}(A) = (\ker(A^*))^{\perp} = v^{\perp\perp} = \text{span}(v) = \mathbb{R}v.$$
 Conversely, suppose that $\text{Im}(A)=\mathbb{R}v.$ Then, by \cite[Theorem 1, p. 155]{Luenb69},
 $$\ker(A^*)=\text{Im}(A)^\bot=v^\bot,$$
 that is,  $A^*( v^\bot)=\{0\}.$  On the other hand,
  $$\{0\} \subset \{ tA^*(-v_0, v_r)|\; t\leq 0\}= A^*(\mathcal{Q}_m \cap v^{\perp}) \subset A^*(v^{\perp}).$$
 Thus, \eqref{cy1510d} holds, and it follows that the condition $\text{Im}(A) = \mathbb{R}v$ is necessary and sufficient for \eqref{cy1510d} to be valid.
Therefore, in Case 3.3, $H(\bar{x})$ is closed if and only if $\text{Im}(A) = \mathbb{R}v$.
\endproof

As a direct consequence of Theorem~$\ref{cy1210}$, we obtain the following corollary characterizing the cases where $H(\bar{x})$ is not closed.
\begin{Corollary}\label{cy1610} Let $\bar{x}$ be a feasible point of \eqref{SOCP} and
 $H(\bar{x}):=\nabla g(\bar x)^*\left[N_{\mathcal{Q}_m}\big(g(\bar x)\big)\right].$
Then, the set $H(\bar{x})$ is not  closed if and only if
	\begin{equation}\label{cy1910}
		\begin{cases}
			\text{\rm Im}(A)\cap \mathcal{Q}_m=\mathbb{R}_{+}v, \\
			\text{\rm Im}(A)\neq \mathbb{R}v,
		\end{cases} \;\text{ for some } v\in \text{\rm bd}^{+}(\mathcal{Q}_m).
	\end{equation}
\end{Corollary}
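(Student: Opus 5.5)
The plan is to negate Theorem~\ref{cy1210} and simplify the resulting logical expression. By that theorem, $H(\bar x)$ fails to be closed if and only if none of conditions $(i)$--$(iv)$ holds. Condition $(i)$ fails exactly when $g(\bar x)=0$. So, assuming $g(\bar x)=0$, it remains to show that the simultaneous failure of $(ii)$, $(iii)$ and $(iv)$ is equivalent to \eqref{cy1910}.

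For this I will use the trichotomy established in Case~3 of the proof of Theorem~\ref{cy1210}. Since $\mathrm{Im}(A)\cap\mathcal{Q}_m$ is a closed convex cone lying in a subspace, exactly one of the following holds: $\mathrm{Im}(A)\cap\mathrm{int}(\mathcal{Q}_m)\neq\emptyset$; or $\mathrm{Im}(A)\cap\mathcal{Q}_m=\{0\}$; or $\mathrm{Im}(A)\cap\mathcal{Q}_m=\mathbb{R}_+v$ for some $v\in\mathrm{bd}^+(\mathcal{Q}_m)$. Negating $(ii)$ and $(iii)$ therefore leaves precisely the third alternative. In that situation, $(iv)$ fails if and only if $\mathrm{Im}(A)\neq\mathbb{R}w$ for every $w\in\mathrm{bd}^+(\mathcal{Q}_m)$.

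The one point that needs care is to match this with \eqref{cy1910}, where only the particular $v$ generating the ray appears. Suppose $\mathrm{Im}(A)=\mathbb{R}w$ for some $w\in\mathrm{bd}^+(\mathcal{Q}_m)$. Then $\mathrm{Im}(A)\cap\mathcal{Q}_m=\mathbb{R}_+w$, because $-w\notin\mathcal{Q}_m$ by pointedness. Hence $\mathbb{R}_+w=\mathbb{R}_+v$, so $w$ is a positive multiple of $v$ and $\mathbb{R}w=\mathbb{R}v$. Consequently, given the ray condition, "$(iv)$ fails" is equivalent to $\mathrm{Im}(A)\neq\mathbb{R}v$.

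There is one more issue: \eqref{cy1910} as stated does not mention $g(\bar x)=0$. I would handle this by noting that the statement appears intended to be read in the case $g(\bar x)=0$, since otherwise Theorem~\ref{cy1210}$(i)$ gives closedness outright. Accordingly, I would either make the hypothesis $g(\bar x)=0$ explicit in the corollary, or observe that this is the only case in which non-closedness can occur.
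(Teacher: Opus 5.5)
Your proof is correct and follows the paper's own route: the paper gives the corollary as a direct consequence of Theorem~\ref{cy1210}, and your negation of its conditions, with the pointedness argument showing that the $v$ in \eqref{cy1910} and the $w$ in condition $(iv)$ span the same line, supplies the one step the paper leaves implicit. You are also right that the hypothesis $g(\bar x)=0$ is genuinely missing from the statement: for instance $Ax=(x_1,x_1,x_2)$, $b=0$, $\bar x=(1,0)$ satisfies \eqref{cy1910} with $v=(1,1,0)$, yet $g(\bar x)=(1,1,0)\in\text{bd}^+(\mathcal{Q}_3)$ and $H(\bar x)=A^*[\mathbb{R}_+(-1,1,0)]=\{0\}$ is closed, so only your first remedy (adding $g(\bar x)=0$ to the corollary) repairs the ``if'' direction, while your second remark covers only the ``only if'' direction.
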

Corollary \ref{cy1610} readily confirms the non-closedness of $H(\bar{x})$ in \cite[Example 1]{andreani23c}:
\begin{Example}{\rm  Consider the affine second-order cone constraint  as defined in \cite[Example 1]{andreani23c}:
 $$g(x) := Ax \in \mathcal{Q}_3,$$
 where  $Ax = (x_1, x_1, x_2)$ for $x=(x_1,x_2)\in \R^2$.  Let $\bar x= (0, 0)$ and $H(\bar x):=\nabla g(\bar x)^*\left[N_{\mathcal{Q}_3}\big(g(\bar x)\big)\right]$. Then $\bar x$ is a feasible point with $g(\bar x)=0.$  Furthermore,
 $$\text{\rm Im}(A)\cap \mathcal{Q}_3=\left\{(x_1, x_1, x_2)\ |\ (x_1,x_2)\in \R^2, x_1\geq \sqrt{x_1^2+x_2^2}\right\}=\R_+(1,1,0),$$
with $(1,1,0)\in {\rm bd}^+(\mathcal{Q}_3),$
while $$\text{\rm Im}(A)=\left\{(x_1, x_1, x_2)\ |\ (x_1,x_2)\in \R^2\right\}=\R(1,1)\times \R\not=\R(1,1,0).$$
Thus, $H(\bar{x})$ is not closed by Corollary \ref{cy1610}, in agreement with \cite[Example 1]{andreani23c}.
}\end{Example}
Combining Theorems \ref{cy209} and \ref{cy1210}, we obtain the following characterization of CRCQ for affine second-order cone constraints of the form:
\eqref{SOCP}.
\begin{Theorem}\label{cy2010}
Let $\bar{x}$ be a feasible point of \eqref{SOCP}. Then, the CRCQ is satisfied at $\bar x$ if and only if  one of the following conditions holds:
\begin{itemize}
   \item[$(i)$] $g(\bar{x}) \in {\rm int}(\mathcal{Q}_m)$;
    \item[$(ii)$] $g(\bar x)\in {\rm bd}^+(\mathcal{Q}_m)$ and the nondegeneracy condition holds at $\bar{x}$;
 \item[$(iii)$] $g(\bar x)\in {\rm bd}^+(\mathcal{Q}_m)$ and the reduced mapping $\mathcal{G}$ vanishes  on a neighborhood of $\bar{x}$;
\item[$(iv)$] $g(\bar x)=0$ and $\text{\rm Im}(A)\cap \text{\rm int}(\mathcal{Q}_m)\neq\emptyset$;
	\item[$(v)$] $g(\bar x)=0$ and ${\rm Im}(A)\cap \mathcal{Q}_m=\{0\}$;
	\item[$(vi)$] $g(\bar x)=0$ and $\text{\rm Im}(A)=\mathbb{R}v$ for some $v\in \text{\rm bd}^{+}(\mathcal{Q}_m)$.
  \end{itemize}
\end{Theorem}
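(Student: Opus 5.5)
By definition, CRCQ holds at $\bar x$ exactly when the FCR property holds at $\bar x$ and $H(\bar x)$ is closed. The plan is therefore to intersect the two characterizations already available: Theorem \ref{cy209} for the FCR property and Theorem \ref{cy1210} for closedness of $H(\bar x)$. Since a feasible point satisfies exactly one of $g(\bar x)\in{\rm int}(\mathcal{Q}_m)$, $g(\bar x)\in{\rm bd}^+(\mathcal{Q}_m)$, or $g(\bar x)=0$, I split the argument according to the location of $g(\bar x)$. In each case I check which conditions of the two theorems can hold simultaneously.

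\emph{Case $g(\bar x)\in{\rm int}(\mathcal{Q}_m)$.} Theorem \ref{cy209}$(ii)$ gives the FCR property. Theorem \ref{cy1210}$(i)$ gives closedness of $H(\bar x)$, because $g(\bar x)\in\mathcal{Q}_m\setminus\{0\}$. So CRCQ always holds here, which is condition $(i)$.

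\emph{Case $g(\bar x)\in{\rm bd}^+(\mathcal{Q}_m)$.} Again $g(\bar x)\in \mathcal{Q}_m\setminus\{0\}$, so $H(\bar x)$ is automatically closed by Theorem \ref{cy1210}$(i)$. CRCQ is then equivalent to the FCR property. By Theorem \ref{cy209}, and because conditions $(i)$ and $(ii)$ there are excluded in this case, FCR holds iff either nondegeneracy holds at $\bar x$ or $\mathcal{G}$ vanishes near $\bar x$. These are exactly conditions $(ii)$ and $(iii)$.

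\emph{Case $g(\bar x)=0$.} Theorem \ref{cy209}$(i)$ gives FCR unconditionally. CRCQ therefore reduces to closedness of $H(\bar x)$. Theorem \ref{cy1210}$(i)$ is excluded, so closedness holds iff one of Theorem \ref{cy1210}$(ii)$--$(iv)$ holds. These are conditions $(iv)$--$(vi)$.

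For the converse direction, each of $(i)$--$(vi)$ pins down the location of $g(\bar x)$, so it falls into exactly one of the three cases above, and the same argument applies in reverse. There is no genuine obstacle. The only point needing care is to make the case analysis exhaustive and exclusive, so that conditions from the two theorems are never combined across incompatible locations of $g(\bar x)$. For example, condition $(i)$ of Theorem \ref{cy209} must be paired only with conditions $(ii)$--$(iv)$ of Theorem \ref{cy1210}.
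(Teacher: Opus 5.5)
Your proposal is correct and follows the paper's proof: CRCQ is split into the FCR property plus closedness of $H(\bar x)$, and the characterizations of Theorems \ref{cy209} and \ref{cy1210} are intersected case by case according to the location of $g(\bar x)$. The only cosmetic difference is that the paper treats $g(\bar x)\in{\rm int}(\mathcal{Q}_m)$ and $g(\bar x)\in{\rm bd}^+(\mathcal{Q}_m)$ together as the single case $g(\bar x)\in\mathcal{Q}_m\setminus\{0\}$, where $H(\bar x)$ is closed and CRCQ reduces to the FCR property.
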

\noindent {\it Proof}. To establish the result, we consider the following two cases:

{\it  Case 1:}  $g(\bar{x}) \in \mathcal{Q}_m \setminus \{0\}$. Then,  by Theorem \ref{cy1210}, the set $H(\bar{x})$ is closed. Consequently, CRCQ is equivalent to the FCR property. So, in this case, Theorem \ref{cy209} implies that CRCQ is satisfied at $\bar{x}$ if and only if one of the conditions $(i)$ through $(iii)$ holds.

{\it Case 2:} $g(\bar{x}) = 0$. Then, by Theorem \ref{cy209},  the FCR property holds  at $\bar{x}$. Therefore, the validity of CRCQ at $\bar{x}$ reduces to the closedness of $H(\bar{x})$. So, in this case, Theorem \ref{cy1210} implies that CRCQ is satisfied at $\bar{x}$ if and only if one of the conditions $(iv)$ through $(vi)$ holds.

Therefore, by combining these two cases, we get the desired conclusion.
\endproof

The following affine second-order cone constraint was provided in \cite[Example 4]{andreani23c} to demonstrate that CRCQ does not necessarily imply Seq-CRCQ. By applying Theorem~\ref{cy2010}, it can be readily verified that CRCQ holds in this specific instance.
\begin{Example}{\rm  Consider the affine second-order cone constraint  as defined in \cite[Example 4]{andreani23c}:
 $$g(x) := Ax \in \mathcal{Q}_3,$$
 where  $Ax = (x,-x, 0)$ for $x\in  \R$.  Let $\bar x= 0$.  Then $\bar x$ is a feasible point with $g(\bar x)=0.$  Furthermore,
 $$\text{\rm Im}(A)\cap \mathcal{Q}_3=\left\{(x, -x, 0)\ |\ x\in \R, x\geq \sqrt{(-x)^2+0^2}\right\}=\R_+(1,-1,0),$$
with $(1,-1,0)\in {\rm bd}^+(\mathcal{Q}_3).$
Therefore, by Theorem~\ref{cy2010},  CRCQ holds at $\bar x$, which is consistent with the result given in \cite[Example 4]{andreani23c}.
}\end{Example}

\section{The Equivalence of CRCQ and MSCQ for Affine Second-Order Cone Constraints}

In this section, we establish the equivalence between CRCQ and MSCQ for affine second-order cone constraints. The following theorem asserts this  relationship.
\begin{Theorem}\label{cy1511} Let $\bar{x}$ be a feasible point of \eqref{SOCP}. Then the following assertions are equivalent:
\begin{itemize}
\item[$(i)$]   CRCQ holds at $\bar x$;
 \item[$(ii)$]    MSCQ holds $\bar x$.
 \end{itemize}
\end{Theorem}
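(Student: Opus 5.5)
The plan is to prove both implications by going through the case list of Theorem~\ref{cy2010}. Throughout I use that $g(x)=g(\bar x)+A(x-\bar x)$.

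\emph{(i)$\Rightarrow$(ii).} Assume CRCQ holds, so one of the conditions $(i)$–$(vi)$ of Theorem~\ref{cy2010} is satisfied; I treat each in turn.
\begin{itemize}
\item In cases $(i)$ and $(ii)$, RCQ holds at $\bar x$. This is trivial in case $(i)$, and in case $(ii)$ it follows because nondegeneracy implies RCQ. Then MSCQ follows from \cite[Theorem 2.87]{BS00}.
\item In case $(iv)$, pick $A z_0\in{\rm int}(\mathcal{Q}_m)$. Since $g(\bar x)=0$, we have $g(\bar x+z_0)=Az_0$, so $g(\bar x)+{\rm Im}(A)-\mathcal{Q}_m={\rm Im}(A)-\mathcal{Q}_m\supset Az_0-\mathcal{Q}_m$, which contains a neighborhood of $0$. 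Hence RCQ and MSCQ hold.
\item In case $(iii)$, $\Omega$ coincides near $\bar x$ with $\{x\mid \mathcal{G}(x)\in\mathbb{R}_+\}$, which is a whole neighborhood of $\bar x$. So ${\rm dist}(x,\Omega)=0$ for $x$ near $\bar x$, and MSCQ is trivial.
\item In case $(v)$, $\Omega=\bar x+{\rm Ker}(A)$. The continuous function $y\mapsto{\rm dist}(y,\mathcal{Q}_m)$ is positive on the compact set $\{y\in{\rm Im}(A)\mid \|y\|=1\}$, so it is bounded below there by some $c>0$. By positive homogeneity, ${\rm dist}(Az,\mathcal{Q}_m)\ge c\|Az\|$ for all $z$. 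Combining this with the elementary bound ${\rm dist}(z,{\rm Ker}(A))\le \kappa_0\|Az\|$ (Hoffman's bound for a linear system) gives MSCQ.
\item In case $(vi)$, write $Az=\varphi(z)v$ with $\varphi$ a nonzero linear functional. Then $\Omega=\bar x+\{z\mid\varphi(z)\ge0\}$ is a half-space. For $\varphi(z)<0$ we have $Az\in-\mathcal{Q}_m$, so by \eqref{h312} ${\rm dist}(Az,\mathcal{Q}_m)=|\varphi(z)|\,\|v\|$. On the other hand, ${\rm dist}(z,\{\varphi\ge 0\})=|\varphi(z)|/\|\varphi\|$. This gives MSCQ with $\kappa=1/(\|\varphi\|\|v\|)$.
\end{itemize}

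\emph{(ii)$\Rightarrow$(i).} Assume MSCQ holds but CRCQ fails. By Theorem~\ref{cy2010} there are two possibilities.

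First, suppose $g(\bar x)=0$. Then MSCQ yields $N_\Omega(\bar x)=H(\bar x)$. Since $\Omega$ is closed and convex, $N_\Omega(\bar x)$ is closed, so $H(\bar x)$ is closed. But since CRCQ fails, Theorem~\ref{cy2010} together with Theorem~\ref{cy1210} (equivalently, Corollary~\ref{cy1610}) says $H(\bar x)$ is not closed. This is a contradiction.

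Second, suppose $\bar y:=g(\bar x)\in{\rm bd}^+(\mathcal{Q}_m)$, nondegeneracy fails, and $\mathcal{G}$ does not vanish near $\bar x$. As in the proof of Theorem~\ref{cy209}, the failure of nondegeneracy means $\nabla\mathcal{G}(\bar x)=0$. Hence $H(\bar x)=\nabla\mathcal{G}(\bar x)^*(\mathbb{R}_+)=\{0\}$, and MSCQ gives $N_\Omega(\bar x)=\{0\}$, i.e. $\bar x\in{\rm int}(\Omega)$. Consequently $\mathcal{G}(x)\ge 0$ for all $x$ near $\bar x$.

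The key step, which I expect to be the main obstacle, is showing that also $\mathcal{G}\le 0$ near $\bar x$. My plan is to use concavity. The function $\mathcal{G}(x)=g_0(x)-\|g_r(x)\|$ is an affine function minus the norm of an affine map, hence concave on a convex neighborhood of $\bar x$. A concave function with zero gradient at $\bar x$ attains its maximum there, so $\mathcal{G}(x)\le\mathcal{G}(\bar x)=0$. Together with $\mathcal{G}\ge 0$, this gives $\mathcal{G}\equiv0$ near $\bar x$, contradicting the assumption that $\mathcal{G}$ does not vanish. This contradiction completes the converse.
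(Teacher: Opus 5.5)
Your proof is correct, and most of it tracks the paper. You use the same case split along Theorem~\ref{cy2010}, and nondegeneracy/RCQ in the interior, nondegenerate and $\text{Im}(A)\cap\text{int}(\mathcal{Q}_m)\neq\emptyset$ cases. You also use the compactness bound $\text{dist}(y,\mathcal{Q}_m)\ge c\|y\|$ on $\text{Im}(A)$; you quote a Hoffman-type bound for $\text{dist}(z,\ker A)$, where the paper proves it by inverting $A$ on $(\ker A)^{\perp}$. The half-space computation when $\text{Im}(A)=\mathbb{R}v$ and the closedness of $H(\bar x)=N_\Omega(\bar x)$ when $g(\bar x)=0$ also match.

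The genuine difference is the hard step: $g(\bar x)\in\text{bd}^+(\mathcal{Q}_m)$ with $\nabla\mathcal{G}(\bar x)=0$ and $\bar x\in\text{int}(\Omega)$. The paper computes $\nabla^2\phi(\bar x)=-\|g_r(\bar x)\|^{-1}A_r^*(I-uu^*)A_r$. It then uses a second-order Taylor expansion and $\bar x\in\text{int}(\Omega)$ to force this Hessian to vanish. Finally it unwinds that identity into $A_r=uw^*$, $A_0=w^*$, $b=c(1,u)$, i.e.\ $g(x)=(\langle w,x\rangle+c)(1,u)$, from which $\phi\equiv0$ near $\bar x$.

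You instead note that $\mathcal{G}=g_0-\|g_r\|$ is concave on all of $\mathbb{R}^n$ (affine minus the norm of an affine map) and differentiable at $\bar x$. The supergradient inequality with $\nabla\mathcal{G}(\bar x)=0$ then gives $\mathcal{G}\le0$ everywhere. Since $\Omega=\{\mathcal{G}\ge0\}$ contains a neighborhood of $\bar x$, the reverse inequality holds there.

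Your route is shorter and avoids the Hessian formula and the Taylor remainder. It also shows transparently why degeneracy plus MSCQ forces condition $(iv)$ of Theorem~\ref{cy209}. The paper's computation buys more information: in this situation the whole image $g(\mathbb{R}^n)$ lies on the single line $\mathbb{R}(1,u)$ through a boundary ray. That turns condition $(iv)$ of Theorem~\ref{cy209} into an explicit algebraic condition on $A$ and $b$.
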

\noindent {\it Proof.} We first establish the implication $(i) \Rightarrow (ii)$. Assuming that CRCQ holds at $\bar{x}$, we consider the following three cases.

{\it Case 1.1:}  $g(\bar x)\in\text{\rm int}(\mathcal{Q}_m)$.  Then, $T_{\mathcal{Q}_m}\big(g(\bar{x})\big) = \mathbb{R}^m$, which implies $$\text{Im}\big(\nabla g(\bar{x})\big) + \text{lin}\left(T_{\mathcal{Q}_m}\big(g(\bar{x})\big)\right) = \mathbb{R}^m.$$ This equality ensures the nondegeneracy at $\bar{x}$, which in turn implies the satisfaction of MSCQ.

{\it Case 1.2}.  $g(\bar x)\in\text{\rm bd}^{+}(\mathcal{Q}_m)$:
Then, since CRCQ holds at $\bar{x}$, by Theorem $\ref{cy2010}$,  either the nondegeneracy condition is satisfied at $\bar{x}$ or $\mathcal{G}(x) = 0$ for all $x$ in a neighborhood of $\bar{x}$, where $\mathcal{G}(x):=g_0(x)-\|g_r(x)\|.$
If the nondegeneracy condition is satisfied at $\bar{x}$, then MSCQ also holds at this point.
 If $\mathcal{G}(x) = 0$ for all $x$ in a neighborhood of $\bar{x}$, then $\bar{x}$ is an interior point of the feasible set $\Omega := \{x \in \mathbb{R}^n \mid g(x) \in \mathcal{Q}_m\}$. Consequently,   for any $\kappa>0$,  we have
 $$\text{\rm dist}(x,\Omega)=0\leq \kappa \text{\rm dist}\big(g(x),\mathcal{Q}_m\big),$$ for every $x$ in a neighborhood of $\bar x$. Therefore, in this case, MSCQ holds at $\bar x.$

{\it Case 1.3:} $g(\bar x)=0$. Then, since CRCQ holds at $\bar{x}$, by Theorem \ref{cy2010},  one of the following conditions must be satisfied: $\text{Im}(A) \cap \text{int}(\mathcal{Q}_m) \neq \emptyset$, $\text{Im}(A) \cap \mathcal{Q}_m = \{0\}$, or $\text{Im}(A) = \mathbb{R}v$ for some $v \in \text{bd}^{+}(\mathcal{Q}_m)$.

\quad	{\it Case 1.3.1:} $g(\bar x)=0$ and $\text{\rm Im}(A)\cap\; \text{\rm int}(\mathcal{Q}_m)\neq\emptyset$. Then, there exists $d\in \mathbb{R}^n$ such that $Ad\in \text{\rm int}(\mathcal{Q}_m)$. Since $g(\bar{x}) = 0$, the reduced mapping is $\mathcal{G}(x) = (\Xi \circ g)(x) = g(x)$. It follows that$$g(\bar{x}) + \nabla g(\bar{x})d=\mathcal{G}(\bar{x}) + \nabla\mathcal{G}(\bar{x})d  = Ad \in \text{int}(\mathcal{Q}_m).$$ Thus, Robinson’s constraint qualification is satisfied at $\bar{x}$, which ensures the validity of  MSCQ.

\quad	{\it Case 1.3.2:} $g(\bar x)=0$ and  ${\rm Im}(A)\cap \mathcal{Q}_m=\{0\}$. Since $0=g(\bar x)=A\bar x+b$, we get $b=A(-\bar x)$ and $g(x)=Ax+b=A(x-\bar x)$. So, $g(x)\in \text{\rm Im}A$ and
\[\text{\rm dist}(g(x), \mathcal{Q}_m)=\text{\rm dist}\left( A(x-\bar x),\mathcal{Q}_m\right), \text{ for every } x\in\mathbb{R}^n.\]
If $A=0$ then  $\Omega=\R^n$ and for any $\kappa>0$,  we have
 $$\text{\rm dist}(x,\Omega)=0\leq \kappa \text{\rm dist}\big(g(x),\mathcal{Q}_m\big),$$ for every $x\in \R^n$. Therefore, in this case, MSCQ holds at $\bar x.$
Suppose now that $A \neq 0$. Then $S := \{y \in \text{Im}(A) \mid \|y\| = 1\}$ is a nonempty compact set, and   the function $y\mapsto \text{\rm dist}(y,  \mathcal{Q}_m)$ is continuous on  $S$. Furthermore, since ${\rm Im}(A)\cap \mathcal{Q}_m=\{0\}$, we have  $\text{\rm dist}(y,  \mathcal{Q}_m)>0$  for every $y\in S$. Therefore,
\[\eta:=\min_{y\in S}\text{\rm dist}(y,  \mathcal{Q}_m)>0,\]
which ensures that  $\text{\rm dist}(y,  \mathcal{Q}_m)\geq \eta>0$ for every $y\in S$.  Thus, since  $\mathcal{Q}_m$ is a cone and $\text{Im}(A)$ is a subspace, we get $\text{dist}(y, \mathcal{Q}_m) \geq \eta \|y\|$ for all $y \in \text{Im}(A)$.  Consequently,
\begin{equation}\label{cy811}
	\|A(x-\bar x)\|\leq \frac{1}{\eta}\text{\rm dist}\big(A(x-\bar x), \mathcal{Q}_m\big)=\frac{1}{\eta}\text{\rm dist}\big(g(x), \mathcal{Q}_m\big),
\end{equation}
 for every $x\in\mathbb{R}^n$.

 Consider now the mapping $h: \left( \text{\rm ker}(A)\right)^{\perp}\to \text{\rm Im}(A)$ defined by $h(v)=Av$ for all $v\in \left( \text{\rm ker}(A)\right)^{\perp}$.  Clearly, $h$ is linear. We next show that $h$ is a bijective mapping.
 Pick any $v\in\text{ker}(h)\subset \left( \text{\rm ker}(A)\right)^{\perp}$. Then $Av=h(v)=0$, which implies $v\in\text{ker}(A)$. Hence,  $v\in\text{ker}(A)\cap \left( \text{\rm ker}(A)\right)^{\perp}=\{0\}$. So, $v=0$ and $\text{ker}(h)=\{0\}$. This justifies the injectivity of  $h$.
To establish the surjectivity of $h$, consider an arbitrary $y \in \text{Im}(A)$. Then there exists $x \in \mathbb{R}^n$ such that $Ax = y$. Utilizing the orthogonal decomposition $\mathbb{R}^n = \ker(A) \oplus (\ker(A))^{\perp}$, we can write $x = k + v$ for some $k \in \ker(A)$ and $v \in (\ker(A))^{\perp}$. It follows that $h(v) = Av = A(x - k) = Ax = y$. This proves that  $h$ is surjective.
Consequently, the linear mapping $h$ is bijective, ensuring the existence of a linear inverse $h^{-1}: \text{Im}(A) \to (\ker(A))^{\perp}$. Since both $\text{Im}(A)$ and $(\ker(A))^{\perp}$ are finite-dimensional normed spaces, $h^{-1}$ is a continuous linear mapping. It follows that
\begin{equation}\label{cy811a}
\|h^{-1}(v)\| \leq M \|v\| \quad \text{for all } v \in \text{Im}(A),
\end{equation}
where $M := \|h^{-1}\|$ denotes the operator norm of the inverse.

Take any $x\in \R^n$. Since  $x-\bar x\in\mathbb{R}^n=\text{\rm ker}(A)\oplus \left( \text{\rm ker}(A)\right)^{\perp}$, we can write
$x-\bar x=k+v$ for  some  $k\in \text{\rm ker}(A) $ and   $v\in \left( \text{\rm ker}(A)\right)^{\perp}$. This yields $A\left( x-\bar x\right)=Ak+Av=Av=h(v)\in\text{\rm Im}(A) $. Set $x_{\Omega}:=\bar x+k$. Since $g(x_{\Omega}) = A(x_{\Omega}-\bar{x}) = Ak = 0\in \mathcal{Q}_m$, it follows that $x_{\Omega} \in \Omega$. Consequently, by appealing to  \eqref{cy811} and \eqref{cy811a}, we obtain
 \[\text{\rm dist}(x, \Omega)\leq \|x-x_{\Omega}\|=\|v\|=\|h^{-1}(A\left( x-\bar x\right))\|\leq M\|A\left( x-\bar x\right)\|\leq \frac{M}{\eta}\text{\rm dist}(g(x), \mathcal{Q}_m).\]
 This shows that MSCQ holds at $\bar x$.

\quad	{\it Case 1.3.3:}  $g(\bar x)=0$ and  $\text{\rm Im}(A)=\mathbb{R}v$ for some $v\in \text{\rm bd}^{+}(\mathcal{Q}_m)$.
Then there exists a nonzero continuous linear mapping $\alpha: \R^n\to \R$ such that $Ax=\alpha(x)v$ for every $x\in \R^n$.
This implies that $g(x)=Ax+b=\alpha(x)v+b$. Since $g(\bar x)=0$, we obtain $g(x)=(\alpha(x)-\alpha(\bar x))v=t(x)v$, where $t(x):=\alpha(x)-\alpha(\bar x)$. Observing that $v_0 = \|v_r\| > 0$, it follows that $g(x) = \big(t(x)v_0, t(x)v_r\big) \in \mathcal{Q}_m$ if and only if $t(x) \geq 0$. Hence,
\[\Omega:=\{x\in\mathbb{R}^n|\; g(x)\in \mathcal{Q}_m\}=\{x\in\mathbb{R}^n|\; t(x)\geq 0\}.\]
Since $\alpha: \R^n\to \R$  is a nonzero continuous linear mapping, by the Riesz representation theorem, there exists a nonzero vector $a\in\R^n$ such that $\alpha(x)=\langle x,a \rangle $ for every $x\in \R^n$.
Note that $t(x)=\langle x-\bar x, a\rangle$. Pick any $x\in \R^n$ with $t(x)<0$. Then, the projection $p:=\Pi_{\Omega}(x)$ of $x$ onto $\Omega$  lies on the line through $x$ that is perpendicular to the hyperplane $\{u\in \R^n\ |\ t(u)=0\}$. So, we can  find $s\in\mathbb{R}$ such that $p-x=sa$ and $t(p)=0$. We have
\[0=t(p)=\langle p-\bar x, a\rangle=\la a, x+sa-\bar x\ra=\alpha(x)- \alpha(\bar x)+s\|a\|^2=t(x)+s\|a\|^2,\]
which implies that $s=-\frac{t(x)}{\|a\|^2}$ and $p=x+sa=x-\frac{t(x)}{\|a\|^2}a$. Hence,
\[p=\Pi_{\Omega}(x)=\begin{cases}
	x & \text{ if } t(x)\geq 0, \\
	x-\frac{t(x)}{\|a\|^2}a & \text{ if } t(x)< 0.
\end{cases}\]
Consequently,
\[\text{\rm dist}(x, \Omega)=\|p-x\|=\begin{cases}
	0 & \text{ if } t(x)\geq 0, \\
	\frac{|t(x)|}{\|a\|} & \text{ if } t(x)< 0.
\end{cases}\]
Moreover, since $g(x)=t(x)v$, by  $\eqref{h312}$, we get
\[\text{\rm dist}(g(x), \mathcal{Q}_m)=\begin{cases}
		0 & \text{ if } t(x)\geq 0, \\
		|t(x)|\|v\| & \text{ if } t(x)< 0.
\end{cases}\]
Pick now any $x\in\mathbb{R}^n$. If $t(x)\geq 0$, then $\text{\rm dist}(x, \Omega)=0=\text{\rm dist}(g(x), \mathcal{Q}_m)$. Otherwise, if $t(x)<0$, then
\begin{eqnarray*}
	\text{\rm dist}(x, \Omega)=\frac{|t(x)|}{\|a\|}=\kappa \text{\rm dist}(g(x), \mathcal{Q}_m),
\end{eqnarray*}
where $\kappa:=\frac{1}{\|a\|\|v\|}>0$. Hence, MSCQ holds at $\bar x$.

We now establish the implication $(i) \Rightarrow (ii)$. Suppose that MSCQ holds at $\bar{x}$. If $g(\bar x)=0$ then  $H(\bar{x})= N_{\Omega}(\bar{x})$ is closed, and by Theorem \ref{cy209}, the FCR is satisfied. Thus, in this case,  CRCQ holds at $\bar{x}$. If either $g(\bar{x}) \in \text{int}(\mathcal{Q}_m)$ or $g(\bar{x}) \in \text{bd}^+(\mathcal{Q}_m)$ with the nondegeneracy condition holding at $\bar{x}$, then Theorem \ref{cy2010} ensures the validity of CRCQ at $\bar{x}$. It remains only to examine the case where $g(\bar{x}) \in \text{bd}^+(\mathcal{Q}_m)$ and the nondegeneracy condition is violated at $\bar{x}$. In this case, the reduced mapping defined on a neighborhood $\mathcal{U}$ of $\bar x$ by
$\mathcal{G}(x)=\phi(x):=g_0(x)-\|g_r(x)\|$.
  The failure of the nondegeneracy condition at $\bar{x}$ implies that $\nabla \mathcal{G}(\bar{x}) =\nabla\phi(\bar x)= 0$. On the other hand, since MSCQ is satisfied at $\bar{x}$, we have $$N_\Omega(\bar{x}) = \nabla \mathcal{G}(\bar{x})^* N_{\mathcal{Q}_m}(g(\bar{x})).$$ Therefore,  $N_\Omega(\bar{x}) = \{0\}$, which implies  that $\bar{x} \in \text{int}(\Omega)$; see \cite[Corollary 2.24]{Mord06}.
Let $A$ be partitioned as $A = \begin{bmatrix} A_0 \\ A_r \end{bmatrix}$, where $A_0$ denotes the first row and $A_r$ is the submatrix comprising the remaining rows. Then $g(x)=Ax+b=(A_0x+b_0, A_rx+b_r)$ for all $x\in \R^n.$
 Thus $g_0(x)=A_0x+b_0$ and $g_r(x)=A_rx+b_r$  for all $x\in \R^n$. Consequently, $\nabla g_0(x)=A_0$ and $\nabla g_r(x)=A_r$.
  Hence
  $$\nabla \mathcal{G}(x)=\nabla\phi(x)=\nabla g_0(x)-\frac{1}{\|g_r(x)\|}g_r(x)^*\nabla g_r(x)=A_0-\frac{1}{\|g_r(x)\|}g_r(x)^*A_r.$$
Furthermore,
\[\nabla\phi^2(x)=\nabla^2g_0(x)-\frac{g_r(x)^*}{\|g_r(x)\|}\nabla^2g_r(x)-\frac{1}{\|g_r(x)\|}\nabla g_r(x)^*\left(I-\frac{g_r(x)g_r(x)^*}{\|g_r(x)\|^2} \right)\nabla g_r(x), \]
(see \cite[p.~3123]{BGM19}). So, we get
$$\nabla\phi^2(\bar x)=-\frac{1}{\|g_r(\bar x)\|}A_r^*\left(I-\frac{g_r(\bar x)g_r(\bar x)^*}{\|g_r(\bar x)\|^2}\right)A_r.$$  Note that  $I-\frac{g_r(\bar x)g_r(\bar x)^*}{\|g_r(\bar x)\|^2}$ is  positive semidefinite.
Consequently, $\nabla^2\phi(\bar x)$ is negative semidefinite.
We next aim to prove that $\nabla^2\phi(\bar x)=0$. Suppose, for the sake of contradiction, that $\nabla^2\phi(\bar x) \neq 0$. Then there exists  $d \in \mathbb{R}^n$ such that $\langle d, \nabla^2\phi(\bar x)d \rangle < 0$. Given that $\phi(\bar x) = 0$ and $\nabla \phi(\bar x) = 0$, the Taylor expansion of $\phi$ around $\bar x$ yields that  $$ \phi(\bar x+td) = \phi(\bar x) + t\langle \nabla \phi(\bar x), d \rangle + \frac{t^2}{2} \langle d, \nabla^2\phi(\bar x)d \rangle + o(t^2) = \frac{t^2}{2} \langle d, \nabla^2\phi(\bar x)d \rangle + o(t^2). $$ Consequently, for all non-zero $t$ with $|t|$ sufficiently small, we have $$ \phi(\bar x+td) < \phi(\bar x) = 0, $$ which provides the desired contradiction since  $\bar{x} \in \text{int}(\Omega)$ and $\Omega=\{x\in\mathbb{R}^n|\;\phi(x)\geq 0\}$.
  This shows that $\nabla^2\phi(\bar x)=0$.
  In other words, $A_r^*\left(I-\frac{g_r(\bar x)g_r(\bar x)^*}{\|g_r(\bar x)\|^2}\right)A_r=0$. Thus for all $x\in \mathbb{R}^n$, we get
  $$0=(A_rx)^*\left(I-\frac{g_r(\bar x)g_r(\bar x)^*}{\|g_r(\bar x)\|^2}\right)A_rx,$$ or equivalently, $\|A_rx\|^2=\frac{1}{\|g_r(\bar x)\|^2}\big(g_r(\bar x)^*A_rx\big)^2$. Moreover, by the Cauchy-Schwarz inequality,
\[\|A_rx\|^2=\frac{1}{\|g_r(\bar x)\|^2}\left(g_r(\bar x)^*A_rx\right)^2\leq \frac{1}{\|g_r(\bar x)\|^2}\|g_r(\bar x)\|^2\|A_rx\|^2=\|A_rx\|^2.\]
So, for every $x\in \mathbb{R}^n$ there exists $\lambda_x\in\mathbb{R}$ such that $A_rx=\lambda_xu$, where $u:=\frac{1}{\|g_r(\bar x)\|}g_r(\bar x).$
  Due to the linearity of $A_r$ and $u\not=0$, the mapping $\lambda: \mathbb{R}^n\to \mathbb{R}$ defined by $\lambda(x)=\lambda_x$ is a linear functional. Let $\{e_1, \ldots, e_n\}$ be the canonical basis of $\mathbb{R}^n$. Then, for every $x=(x_1, \ldots, x_n)\in \mathbb{R}^n$, we see that  \[\lambda(x)=\lambda\left( \sum\limits_{i=1}^n x_ie_i\right)= \sum\limits_{i=1}^n x_i\lambda(e_i)=\langle w, x\rangle,\]  where $w:=\big(\lambda(e_1), \ldots, \lambda(e_n)\big)\in\mathbb{R}^n$. Hence, for every $x\in \mathbb{R}^n$,
  $$A_rx= \langle w, x\rangle u=(uw^*)x.$$ This implies  that $A_r=uw^*$. On the other hand, since $\nabla \phi(\bar x)=0$, we have  $A_0=u^*A_r.$
  Therefore,
  $$A_0 =u^*(uw^*)=(u^*u)w^*=\|u\|^2w^*=w^*.$$ So,
\begin{eqnarray*}
	g_0(x)=A_0x+b_0=w^*x+b_0, \; g_r(x)=A_rx+b_r=\langle w,x\rangle u+b_r=(uw^*)x+b_r=u(w^*x)+b_r.
\end{eqnarray*}
This along with  $g_0(\bar x)=\|g_r(\bar x)\|$ and $u=\frac{g_r(\bar x)}{\|g_r(x)\|}$ gives us that
 $$g_r(\bar x)=\|g_r(\bar x)\|u=g_0(\bar x)u=(w^*\bar x+b_0)u=w^*\bar xu+b_0u=\langle w,\bar x\rangle u+b_0u.$$ Hence $b_r=b_0u$. On the other hand, $b_r=g_r(\bar x)- u(w^*\bar x)=(\|g_r(\bar x)\|-w^*\bar x)u$. Let $c:=\|g_r(\bar x)\|-w^*\bar x\in\mathbb{R}$. Then, $b_r=cu$, and $b_0=c$. Therefore,
\[	g_0(x)=w^*x+c, \; g_r(x)=u(w^*x)+cu=(w^*x+c)u.\]
Consequently, $\phi(x)=g_0(x)-\|g_r(x)\|=w^*x+c-|w^*x+c|\|u\|=w^*x+c-|w^*x+c|$.
Now observing that $g_0(\bar x)=w^*\bar x+c>0$ and $g_0$ is continuous, there exists a neighborhood $V$ of $\bar x$ such that $g_0(x)=w^*x+c>0$ for all $x\in V$. So, $\mathcal{G}(x)=\phi(x)=w^*x+c-(w^*x+c)=0$ for all $x\in V$. Therefore, by Theorem \ref{cy2010}, CRCQ holds at $\bar x$.
The proof is completed.
\endproof
\section{Concluding Remarks}

This paper has investigated CRCQ within the framework of linear nonpolyhedral second-order cone programs (SOCPs). We first demonstrated that the facial constant rank property, which is a key requirement for the validity of  CRCQ,  is not universally satisfied in this setting. By deriving a necessary and sufficient condition for this property to hold, we established an easily verifiable characterization of CRCQ. Finally, utilizing this characterization, we proved the equivalence between CRCQ and MSCQ in the linear SOCP context.  Future research could explore several promising directions. The priority is determining whether these results extend to other linear nonpolyhedral cone programming settings, including semidefinite and reducible cone programs. Furthermore, identifying specific  practical problems that satisfy the CRCQ within the nonpolyhedral cone programming framework would be of significant value.
 

\end{document}